\documentclass[a4paper,english]{lipics-v2018}
\usepackage{microtype}
\usepackage{tikz}
\bibliographystyle{plainurl}
\nolinenumbers
\hideLIPIcs

\theoremstyle{plain}
\newtheorem{conjecture}[theorem]{Conjecture}

\newcommand{\R}{\mathbb R}
\newcommand{\eps}{\varepsilon}
\newcommand{\prefrel}{\sqsubseteq}
\newcommand{\sol}{\mathrm{Sol}}
\newcommand{\wordsum}{\Sigma}
\newcommand{\psw}{\mathrm{psw}}
\newcommand{\area}[1]{A(#1)}
\newcommand{\height}[1]{H(#1)}

\title{An Optimal Bound on the Solution Sets of~One-Variable Word Equations and its Consequences}
\titlerunning{Solutions of One-Variable Word Equations}

\author{Dirk Nowotka}{Department of Computer Science, Kiel University, 24098 Kiel, Germany}{dn@informatik.uni-kiel.de}{}{This work was partially supported by the DFG research project 181615770}
\author{Aleksi Saarela}{Department of Mathematics and Statistics, University of Turku, 20014 Turku, Finland}{amsaar@utu.fi}{ https://orcid.org/0000-0002-6636-2317}{}
\authorrunning{D. Nowotka and A. Saarela}
\Copyright{Dirk Nowotka and Aleksi Saarela}

\subjclass{\ccsdesc[500]{Mathematics of computing~Combinatorics on words}}
\keywords{combinatorics on words, word equations, systems of equations}


\EventEditors{Ioannis Chatzigiannakis, Christos Kaklamanis, D\'{a}niel Marx, and Don Sannella}
\EventNoEds{4}
\EventLongTitle{45th International Colloquium on Automata, Languages, and Programming (ICALP 2018)}
\EventShortTitle{ICALP 2018}
\EventAcronym{ICALP}
\EventYear{2018}
\EventDate{July 9--13, 2018}
\EventLocation{Prague, Czech Republic}
\EventLogo{eatcs}
\SeriesVolume{107}
\ArticleNo{225}

\begin{document}

\maketitle

\begin{abstract}
We solve two long-standing open problems on word equations.
Firstly, we prove that a one-variable word equation with constants
has either at most three or an infinite number of solutions.
The existence of such a bound had been conjectured,
and the bound three is optimal.
Secondly, we consider independent systems of three-variable word equations
without constants.
If such a system has a nonperiodic solution,
then this system of equations is at most of size 17.
Although probably not optimal, this is the first finite bound found.
However, the conjecture of that bound being actually two still remains open.
\end{abstract}

\section{Introduction}

If $n$ words satisfy a nontrivial relation,
they can be written as products of $n - 1$ words.
This folklore result is known as the defect theorem,
and it can be seen as analogous to the simple fact of linear algebra
that the dimension of the solution space
of a homogeneous $n$-variable linear equation is $n - 1$.
If an independent equation is added to a system of linear equations,
the dimension of the solution space decreases,
which gives an upper bound $n$
for the size of~independent systems of linear equations,
but no such results are known for word equations.
In fact, the maximal size of independent systems of constant-free word equations
has been one of the biggest open questions in combinatorics on words
for many decades.
In 1983, Culik and Karhum\"aki~\cite{CuKa83} pointed out
that a conjecture of Ehrenfeucht about test sets of formal languages
can be equivalently formulated as claiming that
every infinite system of word equations is equivalent to a finite subsystem.
Ehrenfeucht's conjecture was proved
by Albert and Lawrence~\cite{AlLa85ehrenfeucht}
and independently by Guba~\cite{Gu86},
and it follows that independent systems cannot be infinite,
but no finite upper bounds depending only on the number of variables
have been found.
Independent systems of size $\Theta(n^4)$ on $n$ variables
were constructed by Karhum\"aki and Plandowski~\cite{KaPl94},
and the hidden constant in $\Theta(n^4)$ was improved in \cite{KaSa11adian}.
This is the best known lower bound.

The case of three variables is particularly interesting.
In this case,
it is easy to find systems of size two
that are independent and have a nonperiodic solution,
or systems of size three
that are independent but have no nonperiodic solution,
and Culik and Karhum\"aki conjectured that there are no larger such systems,
but no finite upper bounds have been found even in this case.
In fact, despite Ehrenfeucht's conjecture,
even the existence of a~bound is not guaranteed,
because in principle it might be possible
that there are unboundedly large finite independent systems.
This case of three variables is very striking
because it is the simplest nontrivial case,
but the gap between the almost trivial lower bound and the infinite upper bound
has remained huge despite the considerable attention the problem has received.
Some results about systems of specific forms
are known~\cite{HaNo03,CzKa07,CzPl09},
and some upper bounds that depend on the sizes of the equations
have been proved~\cite{Sa15ejc,HoZe15,NoSa18ijfcs}.
The best current bound is logarithmic with respect to
the size of the smallest equation in the system~\cite{NoSa18ijfcs}.

In the above, we have considered constant-free word equations.
If we add constants, the equations become more complicated.
For constant-free equations,
the three-variable case is the first nontrivial one,
but for equations with constants,
already the one-variable case is interesting.
One-variable equations
have been studied in many articles~\cite{EyGoMa94,DaPl11,LaPl11},
and the main open question about them
is the maximal number of solutions such an equation can have
if we exclude equations with infinitely many solutions
(if the solution set is infinite, it is known to be of a very specific form).
Even finding an example with exactly two solutions is not entirely trivial,
but a simple example was given by Laine and Plandowski~\cite{LaPl11}.
An~example with exactly three solutions was recently found~\cite{NoSa18ijfcs}.
No fixed upper bound, or even the existence of an upper bound, has been proved.
The best known result is a bound that depends logarithmically
on the number of occurrences of the variable in the equation~\cite{LaPl11}.
It can be noted that the solutions of a one-variable equation
can be found in linear time in the RAM model, as proved by Je\.z~\cite{Je16one}.

In this article,
we solve the open problem about sizes of solution sets of one-variable equations
by proving that a one-variable equation
has either infinitely many solutions or at most three,
which is an optimal result.
As a consequence,
we prove the first upper bound for the sizes of independent systems
of constant-free three-variable equations,
thus settling the old open question about the existence of such a bound.
More specifically,
we prove that if an independent system
of constant-free three-variable equations
is independent and has a~nonperiodic solution,
then the system is of size at most 17
(if the system is not required to have a~nonperiodic solution,
then the size can be at most one larger).
This bound is probably not optimal
and the conjecture of Culik and Karhum\"aki remains open,
as does the more general question about $n$-variable equations.

Two previous articles provide crucial tools for our proofs.
The first article is~\cite{Sa17stacs},
where new methods were introduced
to solve a certain open problem on word equations.
We use and further develop these methods to analyze one-variable equations.
The second article is~\cite{NoSa18ijfcs},
where a surprising connection
between the two topics we have discussed above was found:
It was proved that a bound for the maximal size
of a finite solution set of a one-variable equation
implies a (larger) bound for the maximal size of independent systems
of constant-free three-variable equations.

\section{Preliminaries}

We begin this section by considering constant-free word equations.
Let $\Xi$ be an alphabet of variables and $\Gamma$ an alphabet of constants.
A \emph{constant-free word equation}
is a pair $(U, V) \in \Xi^* \times \Xi^*$,
and the solutions of this equation are the morphisms
\begin{math}
    h: \Xi^* \to \Gamma^*
\end{math}
such that $h(U) = h(V)$.
A solution $h$ is \emph{periodic}
if there exists $p \in \Gamma^*$ such that $h(X) \in p^*$ for all $X \in \Xi$.
Otherwise, $h$ is \emph{nonperiodic}.
It is well-known that $h$ is periodic if and only if
$h(PQ) = h(QP)$ for all words $P, Q \in \Xi^*$.

\begin{example} \label{exa:equations1}
    Let $\Xi = \{X, Y, Z\}$ and consider the equation $(XYZ, ZYX)$.
    For all  $p, q \in \Gamma^*$ and $i, j, k \geq 0$,
    the morphism $h$ defined by
    $h(X) = (pq)^i p$, $h(Y) = (qp)^j q$, $h(Z) = (pq)^k p$
    is a~solution of this equation because
    \begin{equation*}
        h(XYZ) = (pq)^i p \cdot (qp)^j q \cdot (pq)^k p = (pq)^{i + j + k + 1} p
        = (pq)^k p \cdot (qp)^j q \cdot (pq)^i p = h(ZYX).
    \end{equation*}
    Every nonperiodic solution of the equation is of this form.
\end{example}

A set of equations is a \emph{system of equations}.
A morphism is a solution of a system
if it is a solution of every equation in the system.
Two equations or systems are \emph{equivalent}
if they have exactly the same solutions.
A system of equations is \emph{independent}
if it is not equivalent to any of its proper subsets.

\begin{example} \label{exa:cert}
    Let $\Xi = \{X, Y, Z\}$ and $\Gamma = \{a, b\}$.
    The system of equations
    $S = \{(\mathit{XYZ}, \mathit{ZYX})$, $(XYYZ, ZYYX)\}$
    is independent and has a nonperiodic solution $h$ defined by
    $h(X) = a$, $h(Y) = b$, $h(Z) = a$.
    To see independence, note that $S$ is not equivalent to $(XYZ, ZYX)$,
    because the morphism $h$ defined by
    $h(X) = a$, $h(Y) = b$, $h(Z) = aba$
    is a solution of $(XYZ, ZYX)$ but not of $S$,
    and $S$ is not equivalent to $(XYYZ, ZYYX)$,
    because the morphism $h$ defined by
    $h(X) = a$, $h(Y) = b$, $h(Z) = abba$
    is a solution of $(XYYZ, ZYYX)$ but not of $S$.
\end{example}

The following question is a big open problem on word equations:
If a system of constant-free three-variable equations
is independent and has a nonperiodic solution,
then how large can the system be?
The largest known examples are of size two, see Example~\ref{exa:cert},
and it has been conjectured that these examples are optimal.
Even the following weaker conjecture is open.

\begin{conjecture} \label{con:threevar}
    There exists a number $c$ such that
    every independent system of constant-free three-variable equations
    with a nonperiodic solution
    is of size $c$ or less.
\end{conjecture}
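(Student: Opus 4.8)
The plan is to deduce Conjecture~\ref{con:threevar}, in the strong form of an explicit constant, from a statement about equations in a single variable. By the reduction of~\cite{NoSa18ijfcs} quoted in the introduction, every upper bound $N$ on the cardinality of a \emph{finite} solution set of a one-variable word equation with constants entails an upper bound, depending only on $N$, on the size of an independent system of constant-free three-variable equations that admits a nonperiodic solution. Hence it suffices to prove that the solution set of a one-variable word equation with constants is either infinite or of size at most $3$; plugging $N=3$ into the reduction then furnishes a valid $c$, and tracing the reduction quantitatively gives $c=17$ (and $c=18$ if the requirement of a nonperiodic solution is dropped). Optimality of $3$ in the one-variable statement is witnessed by the example of~\cite{NoSa18ijfcs}, though optimality of $c$ is not claimed.

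So the substance is the one-variable bound. Write the equation as $U=V$ with $U,V\in(\Gamma\cup\{X\})^*$, and let $p$ and $q$ be the numbers of occurrences of $X$ in $U$ and $V$; a solution is a word $w\in\Gamma^*$ with $h_w(U)=h_w(V)$, where $h_w$ substitutes $w$ for $X$. Several cases are disposed of directly: if $X$ does not occur, or one side is empty, the solution set is all of $\Gamma^*$, a single word, or empty; and if $p\neq q$ there is at most one solution, since comparing lengths forces $|w|$ and then unwinding the equation from the left forces $w$ itself. Hence we may assume $p=q$ and $|U|_\Gamma=|V|_\Gamma$. Ordering the solutions by length, suppose for contradiction that there are at least four of them, $w_1,w_2,w_3,w_4$.

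The heart of the argument is to show that four solutions force an infinite family. The engine is the machinery of~\cite{Sa17stacs}, adapted to the presence of constants. The key point is that a solution $w$ that is long relative to the constant blocks $u_0,\dots,u_p$ and $v_0,\dots,v_q$ cannot be arbitrary: aligning the two equal words $h_w(U)$ and $h_w(V)$, each position where an $X$-block on one side meets an $X$-block on the other yields a periodicity relation of Fine--Wilf type, and together these force $w$ to be a \emph{parametric word} --- of shape $\alpha\beta^{i}\gamma$, or more generally $\alpha_0\beta_1^{i}\alpha_1\cdots\beta_t^{i}\alpha_t$ with a single integer parameter $i$ --- assembled from fixed words built out of the constant blocks together with a bounded ``seed''. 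Substituting the generic member of such a family back into $U=V$ turns ``being a solution'' into a condition on $i$ that either holds for all large $i$, yielding infinitely many solutions outright, or pins $i$ down to a bounded, indeed very small, set of values. Combining this with the structure theory of one-variable equations~\cite{EyGoMa94,LaPl11} --- which already writes the solution set as a union of $O(\log)$ sets, each a singleton or an infinite parametric family --- one is reduced to excluding the possibility that four solutions lie among finitely many singleton pieces; and the parametrization shows that a sufficiently rich collection of solutions must all belong to one common infinite family, giving the contradiction.

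The main obstacle, and the place where genuinely new analysis is required rather than just the $O(\log)$ structure theory, is precisely this last step: excluding ``accidental'' sporadic solutions and ruling out a fourth solution that belongs to no infinite family. One must control how the periodicity constraints coming from different pairs $(w_i,w_j)$ interact and show that they are jointly satisfiable only in the parametric situation; this entails a delicate case analysis on the combinatorics of the constant blocks, powered by periodicity, primitivity, and conjugacy arguments in the free monoid, and it is here that the methods of~\cite{Sa17stacs} must be extended rather than merely invoked. Once the one-variable bound $3$ is secured, the passage through the reduction of~\cite{NoSa18ijfcs} is mechanical and establishes Conjecture~\ref{con:threevar} with $c=17$.
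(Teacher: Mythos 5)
Your first step coincides with the paper's: Theorem~\ref{thm:one_three} converts the statement ``every one-variable equation has infinitely many solutions or at most three'' into Conjecture~\ref{con:threevar} with $c=17$, and the preliminary reductions (no occurrence of $X$, unequal numbers of occurrences of $X$ on the two sides, ordering solutions by length) are routine. But the one-variable bound of three is the entire content of the paper's Theorem~\ref{thm:main}, and your proposal does not prove it. What you offer is an outline --- Fine--Wilf-type periodicity relations force long solutions into one-parameter families $\alpha\beta^{i}\gamma$, substituting back either yields infinitely many solutions or pins $i$ to a small set, and then ``a sufficiently rich collection of solutions must all belong to one common infinite family'' --- and you then state explicitly that the crucial step, excluding a fourth sporadic solution and showing the constraints from different pairs $(w_i,w_j)$ are jointly satisfiable only in the parametric situation, requires a ``delicate case analysis'' in which the methods of~\cite{Sa17stacs} ``must be extended rather than merely invoked.'' That deferred step \emph{is} the open problem: everything before it was already available, since \cite{LaPl11} gives the $8\log n+O(1)$ bound on finite solution sets and Lemma~\ref{lem:one_pq} already says that each family $[(pq)^{+}p]$ with $pq$ primitive contains either all of its members or at most one solution. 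Nothing in your sketch rules out four solutions distributed over four distinct families or singleton pieces, so as written the argument proves no absolute bound at all, only the previously known logarithmic one; the final assertion that many solutions must share a common infinite family is a restatement of the theorem, not an argument for it.

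For comparison, the paper's proof of Theorem~\ref{thm:main} is not a parametric-word analysis but a minimal-counterexample compression argument: take a shortest equation with the empty solution, at least three nonempty solutions and finitely many in total (Lemma~\ref{lem:assumeempty}), put it in normal form (Lemma~\ref{lem:normalform}), show via the area and prefix-sum machinery that all solutions are zero-sum (Lemma~\ref{lem:zerosum}) and that all nonempty solutions share a common height (Lemma~\ref{lem:height}); then either all constant blocks are zero-sum and one compresses by the code of minimal zero-sum words (Lemma~\ref{lem:code}), or one cuts the equation at the first block of nonzero sum (Lemmas~\ref{lem:cutpoint} and~\ref{lem:cut}), projects onto a two-letter alphabet through the prefix-sum word to recover the empty solution (Lemma~\ref{lem:empty}), handles a possible infinite solution set $[p^{*}]$ via Lemma~\ref{lem:cutpointper}, and compresses by blocks of length $|p|$, in every case producing a strictly shorter counterexample --- a contradiction. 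None of this mechanism (zero-sum normalization, common height, cutting and code compression) is present in your proposal, and without it, or some genuine substitute for the step you leave open, the claimed bound of three, and hence $c=17$, is not established.
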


Currently, the best known result is the following.

\begin{theorem}[\cite{NoSa18ijfcs}] \label{thm:three_ind_log}
    Every independent system of constant-free three-variable equations
    is of size $O(\log n)$,
    where $n$ is the length of the shortest equation.
\end{theorem}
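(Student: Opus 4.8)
The plan is to reduce the three-variable question to a one-variable one and then apply the known logarithmic bound on the number of solutions of a one-variable equation (the bound of Laine and Plandowski mentioned in the introduction, which is logarithmic in the number of occurrences of the variable). A standard reduction lets me assume that the system $S$ has a nonperiodic solution $h$: since removing an equation preserves independence, and since on periodic solutions each equation becomes, via the exponent/length correspondence, a homogeneous linear equation $(|U|_X-|V|_X)a+(|U|_Y-|V|_Y)b+(|U|_Z-|V|_Z)c=0$ in three unknowns, the case in which all solutions that matter are periodic collapses to linear algebra in three unknowns and gives $|S|\le 4$.

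So let $e_0$ be a shortest equation of $S$, of length $n$, and let $h$ be a nonperiodic solution. I would invoke the structure of the nonperiodic solutions of a single three-variable constant-free equation: generalizing Example~\ref{exa:equations1}, they form a union of parametric families, each described by words $W_X,W_Y,W_Z$ over a small parameter alphabet together with a few integer exponent parameters, and for an equation of length $n$ there are boundedly many such families, each of total size $O(n)$. For each equation $e\neq e_0$, independence provides a witness $h_e$ solving $S\setminus\{e\}$ but not $e$; as $h_e$ solves $e_0$, it is periodic (and few such witnesses occur) or it lies in a family of $e_0$. A case split on the family type together with a pigeonhole step lets me assume that a constant fraction of the nonperiodic witnesses lie in one family $\mathcal F$; I then substitute the parametrization of $\mathcal F$ into the remaining equations, turning each into a word equation in the parameters of $\mathcal F$.

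The crux is to collapse these parametric equations to a single one-variable word equation. The witnesses inside $\mathcal F$ differ from each other, and from $h$, essentially in one parameter; fixing all other parameters of $\mathcal F$ to their values at $h$, and using a pumping argument to ensure that this specialization preserves non-equivalence, I obtain an independent family of one-variable equations indexed by the equations of $S$, sharing a common solution, in which the number of occurrences of the variable is bounded by the $O(n)$ size of $\mathcal F$. The distinct witnesses then become distinct solutions of one one-variable equation with $O(n)$ variable-occurrences, which therefore has either infinitely many solutions --- a degenerate hidden-periodicity case handled separately for $S$ --- or $O(\log n)$ of them; hence $|S|=O(\log n)$. The main obstacle is precisely this last step: making the specialization to one variable faithful, so that independence of $S$ really yields distinct one-variable solutions, while keeping the number of variable-occurrences linear in $n$ and not in the possibly far larger sizes of the other equations of $S$, and cleanly isolating the degenerate subcase where a specialized equation acquires infinitely many solutions.
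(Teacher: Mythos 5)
The paper does not prove Theorem~\ref{thm:three_ind_log} at all: it is imported from \cite{NoSa18ijfcs}, and, as the concluding section of the paper records, the proof there rests on the Budkina--Markov characterization of three-generator subsemigroups of a free semigroup \cite{BuMa73} (alternatively Spehner's result \cite{Sp76,Sp86}). That characterization is what converts a three-variable constant-free system admitting a nonperiodic solution into one-variable equations with constants --- Theorem~\ref{thm:one_three} is the qualitative face of that reduction --- after which the logarithmic bound of Laine and Plandowski (Theorem~\ref{thm:one_sol_log}) is applied. Your proposal shares the endgame (count solutions of a one-variable equation and invoke the $8\log n + O(1)$ bound), but it replaces the semigroup-theoretic reduction by a parametric, Hmelevskii-style description of the nonperiodic solution set of the shortest equation, and that replacement is where the argument has genuine gaps rather than technical loose ends.

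Concretely: (a) the assertion that the nonperiodic solutions of a three-variable constant-free equation of length $n$ split into boundedly many parametric families of total size $O(n)$ is not an available result; parametric descriptions exist for three unknowns, but the known size bounds are exponential in $n$ and the number of families is not an absolute constant. Since your final estimate is the logarithm of the number of occurrences of the variable in the specialized equation, an exponential-size family would only give $O(n)$, so the bound you assume is essentially the content of the theorem. (b) The step you yourself flag as the crux does not go through as set up: the independence witnesses $h_e$ all satisfy the shortest equation $e_0$, so specializing a family of $e_0$'s own solutions back into $e_0$ yields a one-variable equation that every parameter value solves --- infinitely many solutions, hence no bound --- while specializing into the other equations of $S$ yields one-variable equations whose number of variable occurrences is governed by the lengths of those (possibly much longer) equations, so Theorem~\ref{thm:one_sol_log} does not deliver $O(\log n)$ in the length of the shortest equation; moreover, the claim that after pigeonholing all witnesses differ ``essentially in one parameter'' is unjustified. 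Finally, your dispatch of the case without a nonperiodic solution is also off: a witness for $S\setminus\{e\}$ need not solve $S$ and may be nonperiodic even when $S$ has no nonperiodic solution, so that case does not collapse to linear algebra over periodic morphisms. The reduction in \cite{NoSa18ijfcs} avoids all of this by exploiting the structure of the subsemigroup generated by the values $h(X), h(Y), h(Z)$ of a single nonperiodic solution, rather than the shape of the whole solution set of one equation; as it stands, your text is a program, not a proof.
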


Next, we will consider word equations with constants.
As before,
let $\Xi$ be an alphabet of variables and $\Gamma$ an alphabet of constants.
A \emph{word equation with constants}
is a pair $(U, V) \in (\Xi \cup \Gamma)^* \times (\Xi \cup \Gamma)^*$,
and the solutions of this equation are the constant-preserving morphisms
\begin{math}
    h: (\Xi \cup \Gamma)^* \to \Gamma^*
\end{math}
such that $h(U) = h(V)$.
If $U = V$, then the equation is \emph{trivial}.

In this article, we are interested in the one-variable case $\Xi = \{X\}$.
We use the notation $[u]$
for the constant-preserving morphism $h: (\{X\} \cup \Gamma)^* \to \Gamma^*$
defined by $h(X) = u$.
If $S$ is a set of words, we use the notation $[S] = \{[u] \mid u \in S\}$.
If $[u]$ is a solution of a one-variable equation $E$,
then $u$ is called a \emph{solution word} of $E$.
The set of all solutions of $E$ is denoted by $\sol(E)$.

\begin{example} \label{exa:equations2}
    Let $\Gamma = \{a, b\}$.
    The equation $(Xab, abX)$ has infinitely many solutions
    $[(ab)^i]$, where $i \geq 0$.
    The equation $(XaXbab, abaXbX)$ has exactly two solutions
    $[\eps]$ and $[ab]$.
    The equation $(XXbaaba, aabaXbX)$ has exactly two solutions
    $[a]$ and $[aaba]$.
    The equation
    \begin{equation*}
        (X aXb X aabb ab aXb abaabbab, abaabbab aXb ab aabb X aXb X)
    \end{equation*}
    has exactly three solutions $[\eps], [ab], [abaabbab]$.
\end{example}

The following is a well-known open problem:
If a one-variable equation has only finitely many solutions,
then what is the maximal number of solutions it can have?
Example~\ref{exa:equations2} shows that the answer is at least three,
but no upper bound is known.
Currently, the best known result is the following.

\begin{theorem}[{\cite[Theorems 23, 26, 29]{LaPl11}}] \label{thm:one_sol_log}
    If the solution set of a one-variable equation is finite,
    then it has size at most $8 \log n + O(1)$,
    where $n$ is the number of occurrences of the variable.

    If the solution set is infinite and the equation is not trivial,
    then there are words $p, q$ such that $pq$ is primitive
    and the solution set is $[(pq)^* p]$.
\end{theorem}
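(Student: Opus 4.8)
The plan is to bring the equation $E = (U,V)$ to a normal form and then describe $\sol(E)$ precisely enough to read off both assertions. First I would normalize: deleting the longest common prefix and the longest common suffix of $U$ and $V$ over $\{X\}\cup\Gamma$ does not change $\sol(E)$, so I may assume $U$ and $V$ begin with different symbols and end with different symbols. If one of $U,V$ is empty, or contains no occurrence of $X$, or if $U$ and $V$ begin (or end) with distinct constants, then $\sol(E)$ is empty, a singleton, or governed by a single equality of constant words, and the claim is immediate. In the remaining case exactly one side begins with $X$; say $U = X u_1 X u_2 \cdots X u_k$ and $V = v_0 X v_1 \cdots X v_\ell$ with $v_0 \neq \eps$. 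Since $w = \eps$ is a single substitution check, I assume from now on that $w \neq \eps$.

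The key lemma I would establish is structural: \emph{every non-empty solution word $w$ is a prefix of $v_0^\omega$}, equivalently a prefix of $r^\omega$, where $r$ is the primitive root of $v_0$. To see this, note that since $U$ begins with $X$ and $V$ begins with $v_0$, the equal words $h(U)$ and $h(V)$ begin with $w$ and with $v_0 w$ respectively; comparing their prefixes of length $|w|$ shows that $w$ is the prefix of $v_0 w$ of length $|w|$. If $|w| \le |v_0|$ this says $w$ is a prefix of $v_0$; if $|w| > |v_0|$ it gives $w[1..|v_0|] = v_0$ together with $w[i] = w[i-|v_0|]$ for $i > |v_0|$, so $w$ has period $|v_0|$ and hence is a prefix of $v_0^\omega$. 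Two consequences are immediate: solution words are linearly ordered by $\prefrel$ (any two prefixes of $r^\omega$ are comparable), and if $U$ and $V$ contain different numbers of occurrences of $X$ then length counting fixes $|w|$, so---there being a unique prefix of $r^\omega$ of each length---$\sol(E)$ has at most one element. Hence I may assume $U$ and $V$ contain the same number $k$ of occurrences of $X$.

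It then remains to decide, for each $L$, whether the prefix $p_L$ of $r^\omega$ of length $L$ solves $E$. Writing $p_L = r^q p_b$ with $q = \lfloor L/|r|\rfloor$ and $b = L \bmod |r|$ and substituting $X \mapsto p_L$, the equation $h(U) = h(V)$ turns into a finite list of equalities between the constant blocks $u_i,v_i$ and prescribed factors of $r^\omega$, together with alignment conditions; once $q$ exceeds a threshold determined by the lengths of the constant blocks, the long powers $r^q$ serve only as padding, so whether all of these hold stabilizes and depends only on $b$. Therefore the set of solution lengths is a finite set together with, for each residue $b$ whose condition stabilizes to ``true'', the full arithmetic progression of difference $|r|$ on that residue. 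If at least one such residue occurs, then $\sol(E)$ is infinite; a further (delicate) analysis, applying the structural lemma also at the other end of $E$ and using primitivity of $r$, shows these progressions collapse to a single one, and the resulting set is exactly $[(pq)^* p]$ with $pq$ a rotation of $r$ (hence primitive) and $p$ the initial prefix. Otherwise $\sol(E)$ is finite.

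For the finite case I would prove the bound $8 \log n + O(1)$ in two stages, and I expect the second to be the main obstacle. First, the absence of an infinite progression means the stabilization analysis above, carried out block by block, bounds the length of the longest solution word by a polynomial in $n$, since the thresholds involved are controlled by the number $O(n)$ of constant blocks and the equation length. Second, I would show that consecutive solution words $w \prefrel w'$ in the $\prefrel$-chain have geometrically separated lengths, $|w'| \ge \rho\,|w|$ for some fixed $\rho > 1$, once $|w|$ exceeds a small threshold. The mechanism is a Fine--Wilf argument: if $|w'| < 2|w|$, then $w'$---which begins with $w$---simultaneously carries the short period forced by ``$w$ solves $E$'' and the one forced by ``$w'$ solves $E$''; were these incompatible, $w'$ would satisfy an extra periodicity, producing a solution word of length strictly between $|w|$ and $|w'|$ (and, on iteration, infinitely many), contradicting that $w,w'$ are consecutive (or that $\sol(E)$ is finite). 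Chaining this ratio from the threshold up to the polynomial length bound of the first stage gives at most logarithmically many solutions, i.e.\ $O(\log n)$, and optimizing the thresholds and the ratio $\rho$ yields the constant $8$.
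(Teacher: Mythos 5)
First, note that the paper does not prove this statement at all: it is imported verbatim from Laine and Plandowski \cite{LaPl11} (their Theorems 23, 26, 29), so your proposal has to be judged against that external proof, which is a substantial piece of work. Your structural beginning is fine (cancelling the common prefix/suffix, observing that every nonempty solution word is a prefix of $v_0^\omega$, and disposing of the case of unequal numbers of occurrences of $X$ by a length argument), but the two places where the real content lies are exactly the places you leave as assertions, and one of them cannot work as planned. The claimed bound is $8\log n + O(1)$ where $n$ is the \emph{number of occurrences of the variable}, independent of the lengths of the constant blocks. Your Stage 1 (``the longest solution word is polynomial in $n$'') is false: applying an injective letter-to-word morphism to all constants (or simply using very long constants) keeps $n$ and the number of solutions fixed while making the solution words arbitrarily long; your own hedge ``and the equation length'' concedes this, but then chaining a geometric growth ratio up to that bound yields only $O(\log(\text{length of the equation}))$, not a bound in terms of $n$. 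Obtaining dependence on $n$ alone (let alone the constant $8$) is the heart of the cited theorem, and your plan does not reach it.

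The other steps are also gaps rather than proofs. The geometric-separation claim of Stage 2 is not established: from two close solutions $w \prefrel w'$ you invoke Fine--Wilf to get ``extra periodicity'' of $w'$, but periodicity of a word does not by itself produce a new \emph{solution}; what is needed is an argument in the spirit of Lemma~\ref{lem:one_pq} (Dabrowski--Plandowski), showing that closeness forces an entire family $[(pq)^+p]$ into $\sol(E)$ and hence contradicts finiteness, and that requires genuine work you have not supplied. Similarly, in the infinite case the statement to be proved is that the solution set is \emph{exactly} $[(pq)^*p]$ with $pq$ primitive; your ``stabilization'' claim (long powers of $r$ act as padding) is asserted rather than proved, and the exclusion of sporadic solutions outside the progression together with the collapse of several residue classes into a single family is precisely the delicate part, which you explicitly defer. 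As it stands, the proposal reproduces the easy skeleton of the result but none of the three nontrivial components: the exact description of infinite solution sets, the finite/infinite dichotomy machinery, and the $8\log n + O(1)$ count in terms of the number of variable occurrences.
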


We will need the following lemma.

\begin{lemma}[{\cite[Lemma 1]{DaPl11}}] \label{lem:one_pq}
    Let $E$ be a one-variable equation
    and let $pq$ be primitive.
    The set
    \begin{equation*}
        \sol(E) \cap [(pq)^+ p]
    \end{equation*}
    is either $[(pq)^+ p]$ or has at most one element.
\end{lemma}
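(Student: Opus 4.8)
The plan is to isolate an easy length‑counting case and then reduce the real content to a periodicity argument about powers of a primitive word. First I would fix the shape of the equation: write $E=(U,V)$ with $U=u_0Xu_1\cdots Xu_m$ and $V=v_0Xv_1\cdots Xv_n$, where $u_i,v_j\in\Gamma^*$ and $X$ has $m$ occurrences in $U$ and $n$ in $V$. If $[w]\in\sol(E)$, comparing lengths of $h(U)$ and $h(V)$ gives $(n-m)\,|w|=\sum_i|u_i|-\sum_j|v_j|$. When $m\neq n$ this determines $|w|$ uniquely, and since $pq$ is primitive it is nonempty, so the words $(pq)^ip$ with $i\ge 1$ have pairwise distinct lengths; hence at most one of them is a solution word and $\sol(E)\cap[(pq)^+p]$ has at most one element. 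So I may assume $m=n$, in which case the length condition becomes the $w$-independent identity $\sum_i|u_i|=\sum_j|v_j|$ (if this fails, $\sol(E)=\emptyset$ and we are done). It then remains to show: if $[(pq)^ap]$ and $[(pq)^bp]$ are solutions for some $1\le a<b$, then $[(pq)^kp]\in\sol(E)$ for every $k\ge 1$.

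Put $P=pq$, which is primitive. Substituting $X=P^kp$ and absorbing the leading $p$ of each copy of $X$ into the following constant block rewrites the equation as
\[
  u_0\,P^k(pu_1)\,P^k(pu_2)\cdots P^k(pu_m)\;=\;v_0\,P^k(pv_1)\,P^k(pv_2)\cdots P^k(pv_m). \qquad(\star)
\]
The case $m=1$ already contains the main idea, so I would treat it first as a model. There $(\star)$ reads $u_0P^k(pu_1)=v_0P^k(pv_1)$; after swapping $U$ and $V$ if necessary we may assume $u_0$ is a prefix of $v_0$, say $v_0=u_0e$, so $P^k(pu_1)=e\,P^k(pv_1)$ holds for $k=a$ and $k=b$. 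Substituting the identity at $k=a$ into the one at $k=b$ and cancelling the common suffix $P^a(pv_1)$ yields $eP^{b-a}=P^{b-a}e$, so $e$ commutes with $P$; since $P$ is primitive this forces $e=P^{\ell}$ for the uniquely determined nonnegative integer $\ell=|e|/|P|$. Cancelling $P^a$ in the $k=a$ identity then gives the $k$-free relation $pu_1=P^{\ell}(pv_1)$, whence $u_0P^k(pu_1)=u_0P^kP^{\ell}(pv_1)=u_0P^{\ell}P^k(pv_1)=v_0P^k(pv_1)$ for every $k$, as required.

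For general $m$ the strategy is the same---extract from the two given solutions a finite list of $k$-independent relations among $u_0,pu_1,\dots,pu_m,v_0,pv_1,\dots,pv_m$ and $P$, then check that these make $(\star)$ hold identically in $k$---but the bookkeeping is heavier. Both sides of $(\star)$ display the common word as $m$ runs of $P$ of length $k|P|$ separated by fixed constant blocks, and the shift between the $i$-th run on the two sides is $\delta_i=\sum_{j<i}(|u_j|-|v_j|)$, which does not depend on $k$. The heart of the proof is to show, using primitivity of $P$ together with the Fine--Wilf theorem, that $|P|$ divides every $\delta_i$: otherwise a sufficiently long overlap between a $P$-run from the $U$-side and one from the $V$-side would exhibit, inside a power of $P$, a period properly dividing $|P|$, contradicting primitivity. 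Once the runs are phase-aligned, cancelling the maximal common power of $P$ in each overlap produces relations $pu_i=P^{\ell_i}(pv_i)$ (together with $v_0=u_0P^{\ell_0}$) with $k$-independent exponents, and feeding these back into $(\star)$ collapses both sides to the same word for every $k\ge 1$; hence $\sol(E)\cap[(pq)^+p]=[(pq)^+p]$.

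The main obstacle is precisely this last step. We have only two exponents $a<b$ to work with, while the shifts $\delta_i$ are bounded only by the total length of the constant blocks, so one must either argue that the overlaps available at $k=b$ are already long enough to invoke Fine--Wilf, or isolate the degenerate configurations---a constant block long compared with $k|P|$, or a run of one side falling entirely inside a constant block of the other---and dispatch them by the direct cancellation argument from the $m=1$ case. A cleaner alternative I would also try is an induction on $m$ that peels the leftmost $P$-run off both sides of $(\star)$, reducing to an equation of the same form with $m-1$ runs.
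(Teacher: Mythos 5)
This lemma is not proved in the paper at all: it is imported verbatim as \cite[Lemma 1]{DaPl11}, so there is no internal argument to compare yours against, and your attempt has to stand on its own. The easy reductions are fine: the length argument disposing of the case where $X$ occurs a different number of times on the two sides, and the observation that it suffices to show that two solutions in $[(pq)^+p]$ force all of $[(pq)^+p]$. Your $m=1$ case is also correct and complete: from the two instances you cancel a common suffix to get $eP^{b-a}=P^{b-a}e$, primitivity of $P=pq$ gives $e\in P^*$, and the resulting $k$-free relation $pu_1=P^{\ell}pv_1$ makes the equation hold for every $k$.

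The genuine gap is the general case $m\ge 2$, which is the actual content of the lemma, and you flag it yourself. The step you would need — that $|P|$ divides every shift $\delta_i$, and that each pair of corresponding blocks satisfies a $k$-independent relation of the form $pu_i=P^{\ell_i}pv_i$ (up to the sign of $\ell_i$) — is supposed to be extracted from only the two exponents $a<b$, which may be as small as $1$ and $2$, while the constant blocks $u_i,v_i$ can be arbitrarily long compared with $b\,|P|$. In that regime the overlaps between corresponding $P$-runs that you want to feed into Fine--Wilf can be empty: a run of one side may lie entirely inside a constant block of the other, and then no phase alignment and no blockwise cancellation follows from the two given instances; the "degenerate configurations" you mention are exactly the hard cases, and neither they nor the alternative induction on $m$ (peeling off the leftmost run) is actually carried out. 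As it stands, the proposal proves the lemma only for equations with one occurrence of the variable on each side, and sketches, without closing, the argument for the case that matters; to cite or reconstruct the full statement you should follow the proof in \cite{DaPl11} rather than rely on this outline.
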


A connection between constant-free three-variable equations
and one-variable equations with constants
was recently found~\cite{NoSa18ijfcs}.
Here we give the relevant special case of one of the results.

\begin{theorem}[\cite{NoSa18ijfcs}] \label{thm:one_three}
    If every one-variable word equation
    has either infinitely many solutions or at most three,
    then Conjecture~\ref{con:threevar} is true for $c = 17$.
\end{theorem}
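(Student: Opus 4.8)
\medskip
\noindent\textbf{Proof strategy.}
The plan is to argue by contradiction. Suppose $S=\{e_1,\dots,e_m\}$ is an independent system of constant-free three-variable equations with a nonperiodic solution $h$ and $m\ge 18$; we aim to extract a one-variable equation with a finite solution set of size at least four, contradicting the hypothesis. First we normalize. Since the equations are constant-free, post-composing every morphism under consideration with an injective morphism $\Gamma^*\to\{a,b\}^*$ preserves all solution and non-solution relations, so we may assume $\Gamma=\{a,b\}$. Writing $e_1=(U_1,V_1)$, the identity $h(U_1)=h(V_1)$ with $U_1\ne V_1$ is a nontrivial relation among $h(X),h(Y),h(Z)$, so the defect theorem places these words in a two-generated free submonoid, which after recoding we may assume is $\{a,b\}^*$ itself; in particular $h(X),h(Y),h(Z)$ are not all powers of a single word. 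The defect theorem and the known structure of two-variable equations then drive the familiar reductions --- interchanging $X\leftrightarrow Z$, reversing words, and cancelling shared prefixes and suffixes from each equation --- that bring $h$ into a convenient normal form.

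\medskip
\noindent\textbf{From three variables to one.}
This is the technical core. Guided by the parametrisation of nonperiodic solutions in Example~\ref{exa:equations1} and by the system in Example~\ref{exa:cert}, one shows that in this normal form the relevant nonperiodic solutions are governed by a single integer parameter: there are words $p,q$ with $pq$ primitive such that the values of $Y$ and $Z$ may be frozen while the value of $X$ ranges over $(pq)^{*}p$. Substituting the frozen words for $Y$ and $Z$ turns each $e_j$ into a one-variable equation $E_j$ over $\{X\}\cup\Gamma$ with the property that the morphism $X\mapsto(pq)^{i}p$ solves $e_j$ precisely when $(pq)^{i}p$ is a solution word of $E_j$; in particular $h$ corresponds to some $(pq)^{i_0}p\in\bigcap_j\sol(E_j)$. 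Solution words of $E_j$ outside $[(pq)^{*}p]$ must also be tracked, but Lemma~\ref{lem:one_pq} bounds $\sol(E_j)\cap[(pq)^{+}p]$ --- it equals $[(pq)^{+}p]$ or a single element --- and the hypothesis applied to $E_j$, together with Theorem~\ref{thm:one_sol_log} in the infinite case, shows that this adds only a bounded correction.

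\medskip
\noindent\textbf{A one-variable equation with too many solutions.}
Independence of $S$ gives, for each $j$, a solution of $S\setminus\{e_j\}$ that fails $e_j$. The crux of the reduction is that enough of this structure persists after passing to the one-parameter family to furnish a single one-variable equation $\widehat E$ --- assembled from the $E_j$ by concatenation with pairwise distinct fresh separator constants, together with the bounded corrections above --- whose solution set has size growing linearly in $m$. If $\sol(\widehat E)$ were infinite, Theorem~\ref{thm:one_sol_log} would force it to be $[(rs)^{*}r]$ with $rs$ primitive, and Lemma~\ref{lem:one_pq} together with a short comparison of such families would make the equations of $S$ mutually redundant, contradicting $m\ge 18$; hence $\sol(\widehat E)$ is finite. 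The hypothesis then bounds $|\sol(\widehat E)|$ by $3$, and keeping precise track of the constants lost in the reduction --- separating the at most two ``small'' solution words from the parametric ones and adding the contribution of periodic solutions, which is bounded by linear algebra in the three unknown lengths --- yields $m\le 17$, the desired contradiction; the same count with the nonperiodic common solution dropped gives the ``at most one larger'' variant.

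\medskip
\noindent\textbf{Main obstacle.}
Everything hard is concentrated in the two middle steps: proving that the nonperiodic solutions of a three-variable equation near a given one are genuinely controlled by a single integer parameter, \emph{uniformly across all equations of the system}, so that ``solves $e_j$'' becomes exactly ``is a solution word of a one-variable equation'' rather than a subtler arithmetic condition; and transferring the independence of $S$ --- a statement about morphisms with unrelated values of $Y$ and $Z$ --- to this parametrised picture and packaging it into one one-variable equation with many solutions. Reaching the normal form through the symmetry and cancellation reductions, isolating the finitely many non-parametric nonperiodic solutions, dealing with the periodic degeneracies, and controlling the interplay of ``small'' and ``parametric'' solution words tightly enough to obtain the sharp constant $17$ (rather than some large explicit bound) all demand a careful case analysis.
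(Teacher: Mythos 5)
There is a genuine gap here --- in fact the proposal is a strategy outline rather than a proof, and the parts you yourself flag as the ``main obstacle'' are precisely the entire content of the theorem. Note first that the paper does not prove this statement at all: Theorem~\ref{thm:one_three} is imported from~\cite{NoSa18ijfcs}, and the conclusion of the paper tells you what the real proof rests on, namely the Budkina--Markov~\cite{BuMa73} (or Spehner~\cite{Sp76,Sp86}) characterization of three-generator subsemigroups of a free semigroup. Your sketch replaces that tool by the defect theorem plus an asserted ``single integer parameter'' structure: you claim that near the nonperiodic solution $h$ one may freeze $Y$ and $Z$ and let $X$ range over $(pq)^*p$, uniformly for all equations of the system. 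The defect theorem gives nothing of the sort; it only places $h(X),h(Y),h(Z)$ in a two-generated free submonoid, and a nonperiodic solution of a system can perfectly well be isolated (Example~\ref{exa:cert} is of exactly this kind). Establishing which parametrized families of solutions can occur, and how the independence witnesses $h_j$ (which need not agree with $h$, or with each other, on the frozen variables) interact with them, is what the classification of three-generator subsemigroups is for, and it is missing from your argument.

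The concrete step that would fail even granting the parametrization is the construction of $\widehat E$. Concatenating the one-variable equations $E_j$ with distinct separator constants produces an equation whose solutions are the \emph{common} solutions of the $E_j$ --- that is the standard way to replace a system by a single equivalent equation --- so its solution set is an intersection and cannot grow linearly in $m$; yet your contradiction needs a single one-variable equation with at least four solutions. Nothing in the sketch manufactures such an equation from the independence of $S$, precisely because the witnesses for independence live at different values of $Y$ and $Z$. Finally, the constant $17$ is never derived: ``keeping precise track of the constants lost in the reduction'' is not an argument, and the hypothesis ``finite implies at most three'' only enters your count as an unquantified black box. As it stands, the proposal restates the theorem's difficulty rather than resolving it; to actually prove the statement you would either have to reproduce the case analysis of~\cite{NoSa18ijfcs} built on the Budkina--Markov/Spehner classification, or find a genuinely new route that you have not yet supplied.
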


In this article, we will prove that every one-variable word equation
has either infinitely many solutions or at most three,
and thus Conjecture~\ref{con:threevar} is true for $c = 17$.

\section{Sums of words}

In this section,
we will give some definitions and ideas
that will be used in our proofs.
Most of these were introduced in \cite{Sa17stacs}.

We can assume that the alphabet $\Gamma$ is a subset of $\R$.
Then we can define $\wordsum(w)$
to be the sum of the letters of a word $w \in \Gamma^*$,
that is, if $w = a_1 \dotsm a_n$ and $a_1, \dots, a_n \in \Gamma$,
then $\wordsum(w) = a_1 + \dots + a_n$.
Words $w$ such that $\wordsum(w) = 0$ are called \emph{zero-sum words}.
If $w$ is zero-sum, then the morphism $[w]$ can also be called zero-sum.
The largest and smallest letters in a word $w$
can be denoted by $\max(w)$ and $\min(w)$, respectively.

The \emph{prefix sum word} of $w = a_1 \dotsm a_n$ is the word
\begin{math}
    \psw(w) = b_1 \dotsm b_n,
\end{math}
where
\begin{math}
    b_i = \wordsum(a_1 \dotsm a_i)
\end{math}
for all $i$.
Of course, $\psw(w)$ is usually not a word over $\Gamma$,
but over some other alphabet.
The mapping $\psw$ is injective and length-preserving.
We also use the notation
\begin{math}
    \psw_r(w) = c_1 \dotsm c_n,
\end{math}
where $r \in \R$ and
\begin{math}
    c_i = b_i + r
\end{math}
for all $i$.

\begin{example}
    Let $w = bbcaac$, where $a = 1$, $b = 2$, and $c = -3$.
    We have $|w| = 6$, $\max(w) = 2$, and $\min(w) = -3$.
    Because $\wordsum(w) = 2 + 2 - 3 + 1 + 1 - 3 = 0$, $w$ is a~zero-sum word.
    The prefix sum word of $w$ is $\psw(w) = 2 4 1 2 3 0$,
    and $\max(\psw(w)) = 4$ and $\min(\psw(w)) = 0$.
\end{example}

For a word $w$, we define its
\emph{height} $\height{w}$
and \emph{area} $\area{w}$:
\begin{align*}
    \height{w} &= \max(\psw(w))
        = \max\{\wordsum(u) \mid \eps \ne u \prefrel w\}, \\
    \area{w} &= \wordsum(\psw(w)) = \sum_{u \prefrel w} \wordsum(u),
\end{align*}
where $u \prefrel w$ means that $u$ is a prefix of $w$.
For the empty word, $\height{\eps} = -\infty$ and $\area{\eps} = 0$.

These definitions have the following graphical interpretation:
A word $w = a_1 \dotsm a_n$ can be represented by a polygonal chain by
starting at the origin,
moving $a_1$ steps up, one step to the right,
$a_2$ steps up, one step to the right, and so on.
The end point of this curve is then $(|w|, \wordsum(w))$.
The biggest $y$-coordinate
(after the initial line segment starting at the origin)
is $\height{w}$.
The number $\area{w}$ is the area under the curve,
defined in the same way as a definite integral,
that is, parts below the $x$-axis count as negative areas.
See Figure \ref{fig:curve} for an example.

\begin{figure}[htb]
    \centering
    \begin{tikzpicture}[scale=1]
        \fill[fill=white!90!black]
            (0, 0)
            |- ++(1, 1) |- ++(1, 1) |- ++(1, 1)
            |- ++(1, -2) |- ++(1, -2) |- ++(1, 1)
            |- ++(1, 1) |- ++(0, -1);
        \draw[ultra thick]
            (0, 0) node{$\bullet$}
            |- ++(1, 1) |- ++(1, 1) |- ++(1, 1)
            |- ++(1, -2) |- ++(1, -2) |- ++(1, 1)
            |- ++(1, 1) node{$\bullet$}
                node[above right]{$(|w|, \wordsum(w))$};
        \draw[->] (-1, 0) -- (12, 0);
        \draw[->] (0, -1) -- (0, 4);
        \draw[dashed] (7, 0) -- (7, 1);
        \draw[dashed] (3, 3) -- (10, 3);
        \draw[<->] (10, 0) -- (10, 3);
        \draw (10, 1.5) node[right]{$\height{w}$};
        \draw (2, 1) node{$A_1$};
        \draw (4.5, -0.5) node{$A_2$};
        \draw (6.5, 0.5) node{$A_3$};
        \draw (6, 3.5) node{$\area{w} = A_1 - A_2 + A_3$};
    \end{tikzpicture}
    \caption{
        Representation of the word $w = aaabbaa$,
        where $a = 1$ and $b = -2$.
        We have $|w| = 7$, $\wordsum(w) = 1$,
        $\height{w} = 3$, and $\area{w} = 7$.
    } \label{fig:curve}
\end{figure}

\begin{lemma}
    For words $w_1, \dots, w_n$, we have
   \begin{align*}
        \wordsum(w_1 \dotsm w_n) &= \wordsum(w_1) + \dots + \wordsum(w_n), \\
        \psw(w_1 \dotsm w_n)
            &= \prod_{i = 1}^n \psw_{\wordsum(w_1 \dotsm w_{i - 1})}(w_i), \\
        \height{w_1 \dotsm w_n}
            &= \max\{\wordsum(w_1 \dotsm w_{i - 1}) + \height{w_i}
                \mid 1 \leq i \leq n\}, \\
        \area{w_1 \dotsm w_n}
            &= \sum_{i = 1}^n
                (\area{w_i} + \wordsum(w_1 \dotsm w_{i - 1}) |w_i|).
    \end{align*}
\end{lemma}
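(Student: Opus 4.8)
The statement is a list of four identities relating $\wordsum$, $\psw$, $\height{\cdot}$, and $\area{\cdot}$ of a concatenation $w_1 \dotsm w_n$ to the corresponding quantities of the individual factors $w_i$. My plan is to prove each identity in turn, with the first two being essentially immediate and serving as the foundation for the last two.

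For the first identity, $\wordsum(w_1 \dotsm w_n) = \wordsum(w_1) + \dots + \wordsum(w_n)$, I would simply observe that $\wordsum$ is the sum of all the letters of the word, and concatenation does not change which letters appear (with multiplicity); so this is immediate from the definition, or a one-line induction on $n$. For the second identity, concerning $\psw$, the key point is that if $w = w_1 \dotsm w_n$ and we look at a prefix of $w$ ending inside the block $w_i$, say after reading $w_1 \dotsm w_{i-1}$ and then a prefix $u$ of $w_i$, then the prefix sum at that position is $\wordsum(w_1 \dotsm w_{i-1}) + \wordsum(u)$. This is exactly the $i$-th factor $\psw_{\wordsum(w_1 \dotsm w_{i-1})}(w_i)$ in the claimed product. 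So the identity follows by splitting the positions of $w$ according to which block they fall in and applying the definition of $\psw_r$; here one uses the first identity to identify the shift $r$ at the start of block $i$ as $\wordsum(w_1 \dotsm w_{i-1})$.

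The third and fourth identities then follow by feeding the second identity into the definitions of $\height{\cdot}$ and $\area{\cdot}$. For $\height{w_1 \dotsm w_n} = \max(\psw(w_1 \dotsm w_n))$, the maximum of a concatenation of words is the maximum over the blocks of the maximum within each block; since block $i$ is $\psw_{\wordsum(w_1 \dotsm w_{i-1})}(w_i)$, whose maximum is $\wordsum(w_1 \dotsm w_{i-1}) + \max(\psw(w_i)) = \wordsum(w_1 \dotsm w_{i-1}) + \height{w_i}$, the identity drops out. (One should note the empty-word convention $\height{\eps} = -\infty$ makes the formula correct even when some $w_i = \eps$.) For the area identity, $\area{w_1 \dotsm w_n} = \wordsum(\psw(w_1 \dotsm w_n))$, and using the first identity applied to $\psw(w_1 \dotsm w_n)$ as a concatenation of the blocks, this is $\sum_{i=1}^n \wordsum\bigl(\psw_{\wordsum(w_1 \dotsm w_{i-1})}(w_i)\bigr)$. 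Adding the constant $r = \wordsum(w_1 \dotsm w_{i-1})$ to each of the $|w_i|$ entries of $\psw(w_i)$ increases its sum by $r |w_i|$, so each term equals $\area{w_i} + \wordsum(w_1 \dotsm w_{i-1}) |w_i|$, which is exactly the claim.

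There is no real obstacle here: this lemma is purely bookkeeping, and the only points demanding a little care are (i) chaining the identities in the right order so that the second identity is available before tackling $\height{\cdot}$ and $\area{\cdot}$, and (ii) handling the degenerate cases involving the empty word via the stated conventions $\height{\eps} = -\infty$ and $\area{\eps} = 0$. By symmetry and associativity of concatenation it would in fact suffice to prove each identity for $n = 2$ and then induct, which is probably the cleanest write-up.
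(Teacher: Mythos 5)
Your proposal is correct and fleshes out exactly the argument the paper has in mind; the paper's proof is just the one-line remark that everything follows from the definitions, and your chain (first identity, then the $\psw$ decomposition, then deriving $\height{\cdot}$ and $\area{\cdot}$ from it) is the natural way to make that precise.
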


\begin{proof}
    Follows easily from the definitions.
\end{proof}

When studying words from a combinatorial point of view,
the choice of the alphabet is arbitrary
(except for the size of the alphabet),
so we can assign numerical values to the letters in any way we like,
as long as no two letters get the same value.
The next two lemmas show that, given any word $w$,
the alphabet can be normalized so that $w$ becomes a zero-sum word,
and every zero-sum word can be written as a product of minimal zero-sum words
in a unique way.

\begin{lemma}[{\cite[Lemma 3]{Sa17stacs}}] \label{lem:alphabetnormalization}
    Let $w \in \Gamma^*$.
    There exists an alphabet $\Delta$
    and an isomorphism $h: \Gamma^* \to \Delta^*$
    such that $h(w)$ is zero-sum.
\end{lemma}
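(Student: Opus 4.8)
The plan is to unwind the definitions: an isomorphism $h \colon \Gamma^* \to \Delta^*$ is just a relabelling of letters, i.e.\ it is determined by a bijection sending each letter of $\Gamma$ to a distinct real number, and the zero-sum condition $\wordsum(h(w)) = 0$ constrains only the values assigned to the letters that actually occur in $w$. So I would first dispose of the trivial case $w = \eps$, where $\wordsum(\eps) = 0$ and any isomorphism works, and otherwise let $a_1, \dots, a_k$ be the distinct letters occurring in $w$, with $n_i \geq 1$ the number of occurrences of $a_i$. If $h(a_i) = b_i$, then $\wordsum(h(w)) = \sum_{i=1}^k n_i b_i$, so it suffices to find pairwise distinct reals $b_1, \dots, b_k$ with $\sum_{i=1}^k n_i b_i = 0$; the remaining letters of $\Gamma$ (those not in $w$) can then be sent to arbitrary further pairwise distinct reals, which is possible since $\Gamma$ is finite and $\R$ is infinite.

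Next I would solve this linear constraint while preserving distinctness. The set $L = \{(b_1,\dots,b_k) \in \R^k : \sum_i n_i b_i = 0\}$ is a linear hyperplane (containing the origin), since $(n_1,\dots,n_k) \neq \mathbf 0$. The key point is that $L$ is not contained in the union of the finitely many ``collision'' hyperplanes $H_{ij} = \{b_i = b_j\}$, $1 \le i < j \le k$: if $L \subseteq H_{ij}$ then, both being hyperplanes of $\R^k$, we would have $L = H_{ij}$, which forces $n_\ell = 0$ for all $\ell \notin \{i,j\}$ and $n_i = -n_j$, contradicting $n_i, n_j \ge 1$. Hence each $L \cap H_{ij}$ is a proper linear subspace of $L$, and since a vector space over the infinite field $\R$ is never the union of finitely many proper subspaces, there is a point $(b_1,\dots,b_k) \in L$ avoiding every $H_{ij}$. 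These values are pairwise distinct and satisfy $\sum_i n_i b_i = 0$, so the associated isomorphism $h$ makes $h(w)$ zero-sum, as required. (When $k = 1$ this degenerates to taking $b_1 = 0$, with distinctness vacuous.)

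Since the argument is entirely elementary, I do not expect a real obstacle; the only step needing a moment's care is guaranteeing that the new letter values stay distinct, that is, checking that the solution hyperplane $L$ is not absorbed into the collision hyperplanes $H_{ij}$. This is exactly where the hypothesis $n_i \ge 1$ (the letters genuinely appear in $w$) is used, through the impossibility of $n_i = -n_j$.
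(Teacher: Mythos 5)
Your argument is correct: relabelling letters by a bijection onto distinct reals does give an isomorphism of free monoids, the zero-sum condition only involves the letters occurring in $w$ with their multiplicities $n_i \geq 1$, and your hyperplane-avoidance step (the solution hyperplane $\sum_i n_i b_i = 0$ cannot coincide with any collision hyperplane $b_i = b_j$ because that would force $n_i = -n_j$, and a vector space over $\R$ is not a finite union of proper subspaces) is sound, with the degenerate cases $w = \eps$ and $k = 1$ handled. Note, however, that the paper does not prove this lemma at all -- it is imported from \cite{Sa17stacs} -- and the proof there (and the proof one would normally give) is much shorter than yours: since $\Gamma \subseteq \R$ already, simply translate every letter value by the average $\wordsum(w)/|w|$, i.e.\ set $h(a) = a - \wordsum(w)/|w|$ for each $a \in \Gamma$ (for $w \neq \eps$); translation preserves distinctness of the letter values automatically, and the sum of $h(w)$ becomes $\wordsum(w) - |w| \cdot \wordsum(w)/|w| = 0$. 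So your route is genuinely different: it buys a statement that works for arbitrarily prescribed target multiplicities and over any infinite field, but at the cost of a linear-algebra apparatus that the one-line translation argument avoids entirely.
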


\begin{lemma}[{\cite[Lemma 4]{Sa17stacs}}]
    The set of zero-sum words over $\Gamma$ is a free monoid.
\end{lemma}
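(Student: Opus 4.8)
The plan is to exhibit an explicit base for this monoid and to prove it is a code, i.e.\ that every zero-sum word factors uniquely into base elements; a submonoid of a free monoid with this property is free. Call a nonempty zero-sum word \emph{minimal} if its only zero-sum prefixes are $\eps$ and the word itself, let $Z$ be the set of all zero-sum words over $\Gamma$, and let $B$ be the set of minimal zero-sum words. Note first that $Z$ is a submonoid of $\Gamma^*$, since $\wordsum(uv) = \wordsum(u) + \wordsum(v)$, so it suffices to show that $Z$ is freely generated by $B$.

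The first step is to check that the minimal zero-sum words are precisely the atoms of $Z$: a zero-sum word $w$ is non-minimal exactly when it has a nonempty proper zero-sum prefix $u$, and then $w = uv$ with $v \ne \eps$ and $\wordsum(v) = \wordsum(w) - \wordsum(u) = 0$, exhibiting $w$ as a nontrivial product in $Z$; conversely any such factorization produces a proper zero-sum prefix. From this, $B$ generates $Z$ by an easy induction on length: a zero-sum word is either minimal (a single element of $B$), or splits as $uv$ with $u, v$ nonempty zero-sum and strictly shorter, to which the induction hypothesis applies.

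The core of the argument is uniqueness of the factorization, and here the prefix-sum viewpoint of Section~\ref{sec:sums} (or however the section is labelled) makes the matter transparent. Write $w = a_1 \dotsm a_n$, put $s_0 = 0$ and $s_i = \wordsum(a_1 \dotsm a_i)$ for $1 \le i \le n$, and let $C(w) = \{\, i : 0 \le i \le n,\ s_i = 0 \,\}$ be the set of \emph{cut points}; if $w \in Z$ then $0, n \in C(w)$. For any factorization $w = u_1 \dotsm u_m$ with all $u_l \in Z$, each partial length $|u_1| + \dots + |u_j|$ lies in $C(w)$, because the corresponding prefix has sum $0$. If moreover every $u_l$ is minimal, then conversely every element of $C(w)$ must be such a partial length: a cut point $i$ strictly inside the factor $u_l$ (say after the partial length $e_{l-1}$) would give $s_i - s_{e_{l-1}} = 0$, i.e.\ a nonempty proper zero-sum prefix of $u_l$, contradicting minimality. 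Hence the set of partial lengths of any factorization into minimal words is forced to equal $C(w)$, which determines $m$ and each $u_l$ uniquely. Therefore $B$ is a code and $Z = B^*$ is a free monoid.

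I do not expect a serious obstacle; the only point requiring care is the bookkeeping in the uniqueness step, namely the passage from ``$s_i = 0$ at a position inside a factor'' to ``that factor has a proper zero-sum prefix'' via $s_i - s_j = \wordsum$ of the intervening block. As a shortcut one could instead invoke Sch\"utzenberger's criterion that a submonoid of a free monoid is free iff it is stable, and verify stability in one line: if $u, w, uv, vw \in Z$ then $\wordsum(v) = \wordsum(uv) - \wordsum(u) = 0$, so $v \in Z$. The prefix-sum proof is preferable here, though, because it also identifies the base $B$ explicitly, and the factorization into minimal zero-sum words is what the subsequent arguments rely on.
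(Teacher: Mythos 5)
Your proof is correct. The paper itself does not prove this lemma---it cites it from \cite{Sa17stacs}---but your argument is exactly the one the paper implicitly relies on: later (just before Lemma~\ref{lem:code}) it refers to ``the set of all minimal zero-sum words (those zero-sum words which cannot be written as a product of two shorter zero-sum words)'' as a code, and your cut-point argument is the natural justification for that, with the prefix-sum picture doing the work. Both of your variants are sound: the explicit cut-point argument establishes that the minimal zero-sum words form the (unique) base, which is what the subsequent compression lemma actually needs; the one-line stability check ($u, w, uv, vw$ zero-sum $\Rightarrow \wordsum(v)=\wordsum(uv)-\wordsum(u)=0$) gives freeness immediately via Sch\"utzenberger's criterion but does not by itself identify the base. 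You were right to prefer the first.
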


\section{Equations in normal form}

If a one-variable equation has more occurrences of the variable
on the left-hand side than on the right-hand side, or vice versa,
then it is easy to see by a length argument
that it can have at most one solution.
Therefore every one-variable equation with more than one solution
can be written in the form
\begin{equation} \label{eq:normalform}
    (u_0 X u_1 \dotsm X u_n, v_0 X v_1 \dotsm X v_n),
\end{equation}
where $X$ is the variable, $n \geq 1$,
and $u_0, \dots, u_n, v_0, \dots, v_n$ are constant words.
Clearly, it must be $|u_0 \dotsm u_n| = |v_0 \dotsm v_n|$.
If the equation is nontrivial, $x_1, x_2$ are solution words,
and $|x_1| \leq |x_2|$,
then it is quite easy to see that $x_1$ is a prefix and a suffix of $x_2$.

We say that the equation \eqref{eq:normalform} is in \emph{normal form}
if the following conditions are satisfied:
\begin{enumerate}[(N1)]
    \item \label{item:emptyzero}
    It has the empty solution
    and at least one other zero-sum solution,
    \item \label{item:less}
    $|u_0 \dotsm u_i| < |v_0 \dotsm v_i|$
    for all $i \in \{0, \dots, n - 1\}$,
    \item \label{item:leq}
    $|u_0 \dotsm u_i| \leq |v_0 \dotsm v_{i - 1}|$
    for all $i \in \{0, \dots, n\}$.
\end{enumerate}
It follows from these conditions that $u_0 = v_n = \eps$.
By the next two lemmas,
it is usually sufficient to consider equations in normal form.

\begin{lemma} \label{lem:assumeempty}
    Let $E$ be a one-variable equation,
    \begin{math}
        \sol(E) = \{[x_0], \dots, [x_m]\},
    \end{math}
    and $|x_0| \leq |x_i|$ for all $i$.
    There exists a one-variable equation $E'$ such that
    \begin{math}
        \sol(E') = \{[\eps], [x_0^{-1} x_1], \dots, [x_0^{-1} x_m]\}.
    \end{math}
\end{lemma}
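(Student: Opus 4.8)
The plan is to transform the equation $E$ so that the shortest solution word $x_0$ is "removed" from every occurrence of the variable. Intuitively, if $x_1$ is a prefix and a suffix of $x_2$ for solution words with $|x_1| \le |x_2|$ (as noted in the excerpt just before the normal-form definition), then $x_0$ is a prefix and a suffix of every other solution word $x_i$, so each $x_i$ can be written as $x_0 y_i = y_i' x_0$ for some words $y_i, y_i'$ of equal length; we want a new equation whose solution words are exactly these quotients $y_i = x_0^{-1} x_i$. The natural idea is to substitute $x_0 X$ (or $X x_0$) for $X$ in $E$: replacing each occurrence of $X$ in $E = (u_0 X u_1 \dotsm X u_n, v_0 X v_1 \dotsm X v_n)$ by, say, $x_0 X$ yields an equation $E'$ over $\{X\} \cup \Gamma$ with $[y]$ a solution of $E'$ iff $[x_0 y]$ is a solution of $E$. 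This gives the right correspondence of solution sets, but I would introduce a fresh letter $x_0$ is a word, not a letter, so the substitution is a composition of constant-preserving morphisms, which is fine.

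The key steps, in order: First, establish that $x_0$ is a prefix of every $x_i$ (this follows from the length argument sketched in the text: the shortest solution word of a nontrivial equation in the form \eqref{eq:normalform} is a prefix and suffix of every other solution word; if $E$ is trivial the statement is immediate since then every word is a solution and one takes $E' = (X,X)$ after the obvious adjustment, or rather the trivial case needs a separate easy treatment). Second, define $E'$ by composing $[u]\mapsto[x_0 u]$ structurally: if $E = (U,V)$ with $U = u_0 X u_1 \dotsm X u_n$, let $E' = (u_0 x_0 X u_1 x_0 X \dotsm x_0 X u_n,\ v_0 x_0 X v_1 x_0 X \dotsm x_0 X v_n)$, i.e. replace each $X$ by $x_0 X$. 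Third, verify the solution-set identity: $[y] \in \sol(E')$ $\iff$ $[x_0 y](U) = [x_0 y](V)$ after the substitution, which by construction is exactly $[z](U)=[z](V)$ with $z = x_0 y$, i.e. $[x_0 y]\in\sol(E)$; hence $\sol(E') = \{[x_0^{-1}x_i] : [x_i]\in\sol(E),\ x_0\text{ a prefix of }x_i\}$, and since $x_0$ is a prefix of all $x_i$ this is $\{[\eps],[x_0^{-1}x_1],\dots,[x_0^{-1}x_m]\}$ as required (note $[x_0^{-1}x_0] = [\eps]$).

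The main obstacle I anticipate is not the algebraic manipulation, which is routine, but making sure the reduction is legitimate: one must confirm that $E$ is nontrivial (or handle the trivial case), and that $x_0$ genuinely is a common prefix of all solution words — this uses the structure of one-variable equations (the shortest solution word divides the others as prefix and suffix) rather than being a formal triviality. A secondary point to be careful about is that substituting $x_0 X$ for $X$ does not accidentally create new solutions or lose old ones: it does not, because $y \mapsto x_0 y$ is an injection on words, so the map $[y]\mapsto[x_0 y]$ is a bijection from $\sol(E')$ onto $\{[z]\in\sol(E) : x_0 \sqsubseteq z\}$, and the latter equals $\sol(E)$ exactly when $x_0$ is the shortest solution word. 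I expect the proof in the paper to also remark that $E'$ has the empty solution, which is the whole point of the normalization, and that this lemma together with its (unstated here) companion brings an arbitrary equation with $\ge 2$ solutions into normal form.
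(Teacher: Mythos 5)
Your proposal is correct and follows essentially the same route as the paper: replace each occurrence of $X$ by $x_0 X$, note that $x_0$ is a prefix of every solution word since $E$ is nontrivial (the case $m=0$ being immediate), and observe that $[y]\in\sol(E')$ iff $[x_0 y]\in\sol(E)$. The paper's version is more terse but the substance is identical.
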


\begin{proof}
    If $m = 0$, the claim is clear.
    Otherwise, we can assume that $E$ is of the form \eqref{eq:normalform}.
    Let $E'$ be the equation we get from $E$ by replacing $X$ by $x_0 X$:
    \begin{equation*}
        E': (u_0 x_0 X u_1 \dotsm x_0 X u_n, v_0 x_0 X v_1 \dotsm x_0 X v_n).
    \end{equation*}
    Because $E$ is nontrivial, $x_0$ is a prefix of every $x_i$.
    Clearly, the word $x_0^{-1} x_i$ is a solution word of $E'$.
    On the other hand, if $x$ is a solution word of $E'$,
    then $x_0 x$ is a solution word of $E$.
    This proves the claim.
\end{proof}

Next we will give an example of
how to transform an equation that satisfies Condition~N\ref{item:emptyzero}
into an equation in normal form.
After the example, we will prove that this can always be done.

\begin{example}
    Consider the equation
    \begin{equation*}
        (X ab X abab X a aba X b X, ab X X X abab a X a X bab).
    \end{equation*}
    By a length argument, it is equivalent to the system of equations
    \begin{equation*}
        (X ab, ab X), (X, X), (abab X, X abab), (a, a), (aba X b X, X a X bab).
    \end{equation*}
    We can drop the trivial equations $(X, X)$ and $(a, a)$,
    and then switch the left-hand and right-hand sides
    of the equations $(abab X, X abab)$ and $(aba X b X, X a X bab)$
    to get the system
    \begin{equation*}
        (X ab, ab X), (X abab, abab X), (X a X bab, aba X b X).
    \end{equation*}
    Then we can combine these equations into the equation
    \begin{equation*}
        (X ab X abab X a X bab, ab X abab X aba X b X),
    \end{equation*}
    which satisfies Conditions~N\ref{item:less} and N\ref{item:leq}.
    (Actually,
    this equation is equivalent to the equation $(X a X bab, aba X b X)$.)
\end{example}

\begin{lemma} \label{lem:normalform}
    Let $E$ be a nontrivial one-variable equation
    with the empty solution and at least one other solution.
    There exists an equation in normal form
    that is equivalent to $E$ up to a renaming of the letters
    and not longer than $E$.
\end{lemma}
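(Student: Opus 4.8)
The plan is to turn the construction of the preceding example into a proof. First I would apply Lemma~\ref{lem:alphabetnormalization} to some nonempty solution word $x$ of $E$: this gives an alphabet $\Delta$ and an isomorphism $h:\Gamma^*\to\Delta^*$ with $h(x)$ zero-sum, and replacing every constant of $E$ by its $h$-image produces an equation $E_1$ of the same length as $E$ whose solutions are exactly the morphisms $[h(w)]$ with $[w]\in\sol(E)$. Thus $E_1$ is nontrivial, has the empty solution together with the nonempty zero-sum solution $[h(x)]$, and hence satisfies Condition~N\ref{item:emptyzero}; since it has more than one solution it can be written in the form~\eqref{eq:normalform}, and because $[\eps]$ is a solution we in fact have $u_0\dotsm u_n=v_0\dotsm v_n$.

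Next comes a length argument. Call a cut of the pair $(U,V)$ a \emph{break point} if it splits both $U$ and $V$ into a prefix and a suffix in such a way that the two prefixes contain the same number of occurrences of $X$ and the same number of constant letters. At a break point the two prefixes then have equal length under every substitution, as do the two suffixes, so any solution of $E_1$ must satisfy the two smaller equations formed by the prefixes and by the suffixes, and conversely a common solution of those two equations solves $E_1$. Hence cutting $E_1$ simultaneously at all its break points yields an equivalent system $(A_1,B_1),\dots,(A_k,B_k)$ of \emph{indivisible} equations, in each of which the two sides carry the same number of occurrences of $X$ and the same number of constants. Discarding each trivial equation $A_j=B_j$ changes nothing; moreover, substituting $\eps$ for $X$ turns $U=V$ into an equality of words whose corresponding blocks have equal length, so it turns $A_j$ and $B_j$ into the same word for every $j$, which means that an equation with no occurrence of $X$ must be trivial. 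Hence every surviving equation has at least one occurrence of $X$ on each side, and at least one survives because $E_1$ is nontrivial.

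The core of the argument is the analysis of a surviving indivisible equation $A=c_0Xc_1\dotsm Xc_\ell$, $B=d_0Xd_1\dotsm Xd_\ell$ with $\ell\ge1$ and $|c_0\dotsm c_\ell|=|d_0\dotsm d_\ell|$. Looking at the function $i\mapsto|c_0\dotsm c_i|-|d_0\dotsm d_i|$, indivisibility forbids it to be $0$ for $0\le i\le\ell-1$ (a zero would produce a break point right after an occurrence of $X$) and forbids it to change sign on that range (a sign change would produce a break point inside a constant block); so after possibly exchanging the two sides of $(A,B)$ we may assume $|c_0\dotsm c_i|<|d_0\dotsm d_i|$ for $0\le i\le\ell-1$, which for $i=0$ forces $c_0=\eps$ and then, via $|c_0\dotsm c_\ell|=|d_0\dotsm d_\ell|$, forces $d_\ell=\eps$. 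A further appeal to indivisibility, again ruling out a break point inside a constant block, together with $d_\ell=\eps$, yields $|c_0\dotsm c_i|\le|d_0\dotsm d_{i-1}|$ for $0\le i\le\ell$. I would orient every surviving equation in this way, concatenate them into a single equation $E'$ of the form~\eqref{eq:normalform}, and check that, because corresponding pieces have equal numbers of constant letters, the offsets coming from the earlier pieces cancel and the two chains of inequalities just obtained become exactly Conditions~N\ref{item:less} and~N\ref{item:leq} for $E'$. Condition~N\ref{item:emptyzero} carries over because $E'$ is equivalent to $E_1$ --- concatenation preserves equivalence precisely because corresponding pieces have the same length under every substitution --- and $|E'|$, being the sum of the lengths of the surviving pieces, satisfies $|E'|\le|E_1|=|E|$; no final renaming back through $h^{-1}$ is needed, since the lemma only asks for equivalence up to a renaming of the letters.

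The step I expect to be the main obstacle is the analysis of indivisibility. One has to be careful that a break point can be hidden \emph{inside} a constant block, not only right after an occurrence of $X$ --- this is exactly what pins down the constant sign of the difference function and the inequalities of type~N\ref{item:leq} --- and one has to watch for degenerate cuts (such as empty leading or trailing constant blocks) so that the surviving pieces really have the shape described above. One then has to verify that the per-piece inequalities survive the concatenation once the constant-count offsets of the preceding pieces have been reinstated. By comparison, the alphabet normalization and the equivalence of the split system with $E_1$ are routine.
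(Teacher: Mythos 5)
Your argument is correct in substance, and it is organized differently from the paper's. The paper proves the lemma by an extremal argument: among all equations equivalent to $E$ it takes a shortest one, and among those one maximizing the first index $j$ with $|u_0 \dotsm u_j| \geq |v_0 \dotsm v_j|$, and then refutes any failure of Conditions~N\ref{item:less} or N\ref{item:leq} by one of three local surgeries (merging the two halves at a cut where prefix lengths agree under every substitution, which shortens the equation; cutting inside a constant block and recombining the halves crosswise, which preserves the length but increases $j$; cancelling the overlapping factor when N\ref{item:leq} fails, which again shortens). Your proof makes the decomposition explicit: cut at all break points, classify the indivisible pieces, reorient each, and reassemble. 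The underlying moves are the same --- your ``swap the sides of a piece'' is exactly the paper's crosswise recombination, and your discarded trivial pieces play the role of its merging step --- but your route is constructive, at the cost of the bookkeeping you flag at the end. That bookkeeping does go through: each oriented piece has empty first block on the short side and empty last block on the long side, corresponding pieces have equal constant counts, so the concatenated equation inherits N\ref{item:less} (at a junction the inequality reduces to $|d_0'| > |c_0'| = 0$ for the next piece) and N\ref{item:leq}, while N\ref{item:emptyzero} and the length bound are immediate. What the paper's formulation buys is that all of this is absorbed by minimality; what yours buys is an explicit structural picture of the equation as a product of indivisible pieces.

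One local correction: $|c_0| < |d_0|$ at $i = 0$ does not force $c_0 = \eps$, and $c_0 = \eps$ together with $|c_0 \dotsm c_\ell| = |d_0 \dotsm d_\ell|$ does not force $d_\ell = \eps$. Both facts are really instances of your N\ref{item:leq}-type chain: the break-point-inside-a-constant-block argument (using the strict inequalities already established) yields $|c_0 \dotsm c_i| \leq |d_0 \dotsm d_{i - 1}|$ for all $0 \leq i \leq \ell$ directly, whose case $i = 0$ is $c_0 = \eps$, and the case $i = \ell$ combined with equal total constant length gives $d_\ell = \eps$. So the dependency order in that sentence should be reversed, but no new idea is needed; likewise the degenerate cuts you mention (empty leading or trailing blocks, a zero of the difference function at $i = \ell - 1$ with $c_\ell = d_\ell = \eps$) are handled by cutting just before or just after the adjacent occurrence of $X$ instead.
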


\begin{proof}
    We can assume that
    $E$ has a nonempty zero-sum solution
    by Lemma~\ref{lem:alphabetnormalization}.
    We can also assume that
    $E$ is a shortest equation among all the equivalent equations,
    and $E$ is written as \eqref{eq:normalform}.
    Finally, we can let $j \in \{0, \dots, n\}$ be the smallest index such that
    $|u_0 \dotsm u_j| \geq |v_0 \dotsm v_j|$
    (the inequality holds for $j = n$, so $j$ exists),
    and assume that there does not exists an equivalent equally long equation
    for which the index $j$ would be larger.
    
    We are going to prove that $E$ is in normal form.
    We already know that Condition~N\ref{item:emptyzero} holds.
    
    If it were $j < n$ and $|u_0 \dotsm u_j| = |v_0 \dotsm v_j|$,
    then for any word $x$ we would have the sequence of equivalences
    \begin{align*}
        & u_0 x u_1 \dotsm x u_n = v_0 x v_1 \dotsm x v_n \\
        \iff & u_0 x u_1 \dotsm x u_j = v_0 x v_1 \dotsm x v_j
            \land u_{j + 1} x u_{j + 2} \dotsm x u_n
                = v_{j + 1} x v_{j + 2} \dotsm x v_n \\
        \iff & u_0 x u_1 \dotsm x u_j u_{j + 1} x u_{j + 2} \dotsm x u_n
            = v_0 x v_1 \dotsm x v_j v_{j + 1} x v_{j + 2} \dotsm x v_n,
    \end{align*}
    so $E$ would be equivalent to the shorter equation
	\begin{equation*}
	    (u_0 X u_1 \dotsm X u_j u_{j + 1} X u_{j + 2} \dotsm X u_n,
	    v_0 X v_1 \dotsm X v_j v_{j + 1} X v_{j + 2} \dotsm X v_n),
	\end{equation*}
	which would contradict the minimality of $E$.
    On the other hand,
    if it were $j < n$ and $|u_0 \dotsm u_j| > |v_0 \dotsm v_j|$,
    then there would exist words $p, q$ such that
    $u_j = pq$ and $|u_0 \dotsm u_{j - 1} p| = |v_0 \dotsm v_j|$,
    and for any word $x$ we would have the sequence of equivalences
    \begin{align*}
        & u_0 x u_1 \dotsm x u_n = v_0 x v_1 \dotsm x v_n \\
        \iff & u_0 x u_1 \dotsm x u_{j - 1} x p = v_0 x v_1 \dotsm x v_j
            \land q x u_{j + 1} \dotsm x u_n = x v_{j + 1} \dotsm x v_n \\
        \iff & u_0 x u_1 \dotsm x u_{j - 1} x p x v_{j + 1} \dotsm x v_n
            = v_0 x v_1 \dotsm x v_j q x u_{j + 1} \dotsm x u_n,
    \end{align*}
    so $E$ would be equivalent to the equation
	\begin{equation*}
	    ((u_0 X u_1 \dotsm X u_{j - 1} X p X v_{j + 1} \dotsm X v_n,
        v_0 X v_1 \dotsm X v_j q X u_{j + 1} \dotsm X u_n),
	\end{equation*}
	which would contradict the minimality of $j$.
    The only possibility is that $j = n$,
	so Condition~N\ref{item:less} holds.
    
    If there were an index $i \in \{0, \dots, n\}$ such that
    $|u_0 \dotsm u_i| > |v_0 \dotsm v_{i - 1}|$,
    then there would exist words $p, q, r$ such that $u_i = pq$, $v_i = qr$,
    and $|u_0 \dotsm u_{i - 1} p| = |v_0 \dotsm v_{i - 1}|$,
    and for any word $x$ we would have the sequence of equivalences
    \begin{align*}
        & u_0 x u_1 \dotsm x u_n = v_0 x v_1 \dotsm x v_n \\
        \iff & u_0 x u_1 \dotsm x u_{i - 1} x p = v_0 x v_1 \dotsm x v_{i - 1} x
            \land x u_{i + 1} \dotsm x u_n = r x v_{i + 1} \dotsm x v_n \\
        \iff & u_0 x u_1 \dotsm x u_{i - 1} x p x u_{i + 1} \dotsm x u_n
            = v_0 x v_1 \dotsm x v_{i - 1} x r x v_{i + 1} \dotsm x v_n,
    \end{align*}
    so $E$ would be equivalent to the shorter equation
	\begin{equation*}
	    (u_0 X u_1 \dotsm X u_{i - 1} X p X u_{i + 1} \dotsm X u_n
        = v_0 X v_1 \dotsm X v_{i - 1} X r X v_{i + 1} \dotsm X v_n),
	\end{equation*}
    which would contradict the minimality of $E$.
	This shows that also Condition~N\ref{item:leq} holds,
	so $E$ is in normal form.
\end{proof}

\section{Sums and heights of solutions}

In this section, we prove lemmas about the sums and heights of solution words
of one-variable equations in normal form.

\begin{lemma} \label{lem:zerosum}
    All solutions of an equation in normal form are zero-sum.
\end{lemma}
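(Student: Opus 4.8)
The plan is to exploit Condition~N\ref{item:emptyzero}: by assumption the equation in normal form has a nonempty zero-sum solution word, and I want to propagate the zero-sum property to every solution word. The key observation is that the polygonal-chain interpretation of Section~3 linearizes the equation. If $[x]$ is a solution of $(u_0 X u_1 \dotsm X u_n, v_0 X v_1 \dotsm X v_n)$, then matching the two sides as words means that, writing $s=\wordsum(x)$ and using the prefix-sum/height lemma from Section~3, the two sides have equal length and hence equal total sum, which already gives $\wordsum(u_0\dotsm u_n) + n\,s = \wordsum(v_0\dotsm v_n) + n\,s$, i.e.\ $\wordsum(u_0\dotsm u_n)=\wordsum(v_0\dotsm v_n)$; but that alone does not pin down $s$.

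The real leverage comes from comparing two solutions. Let $[0]$ (the empty solution, $x=\eps$) and $[z]$ with $z$ a nonempty zero-sum solution word both be solutions, and let $[x]$ be an arbitrary solution word with $\wordsum(x)=s$. First I would record the structural fact already noted in the text: for a nontrivial one-variable equation, shorter solution words are prefixes (and suffixes) of longer ones. The plan is to consider, for a solution $[x]$, the equality of the two words $u_0 x u_1 \dotsm x u_n$ and $v_0 x v_1 \dotsm x v_n$ position by position, and translate it via $\psw$ into an equality of prefix-sum words; since Conditions~N\ref{item:less} and~N\ref{item:leq} control exactly where the copies of $x$ sit on each side relative to each other (each occurrence of $X$ on the left lies strictly inside the gap before the corresponding occurrence on the right, and no occurrence of $X$ on the left reaches past the previous occurrence on the right), the matching forces the occurrences of $x$ on the two sides to overlap in a staircase pattern. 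Writing out the sum of letters over a well-chosen prefix — say the prefix of the left side ending just before the $i$-th $X$, compared against the matching prefix of the right side — the contributions of the constant blocks are fixed, and the only free quantity is $s$ appearing with a nonzero integer coefficient coming from the difference in how many full copies of $x$ each side has consumed up to that point. Condition~N\ref{item:less} guarantees this coefficient is genuinely nonzero (the left side is always ``behind''), so one obtains a linear equation $as = b$ with $a \neq 0$ and $b$ determined by the constants; since $s=0$ works (the empty solution, and the given zero-sum solution), we must have $b=0$, and then every solution satisfies $as=0$, hence $s=0$.

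I expect the main obstacle to be making the bookkeeping in the previous paragraph fully rigorous: carefully identifying, for a given solution word $x$, which occurrence of $x$ on the right-hand side is straddled by a chosen cut point on the left-hand side, and extracting the exact integer coefficient of $s$. Conditions~N\ref{item:less} and~N\ref{item:leq} are precisely what is needed here — N\ref{item:leq} says the $i$-th left occurrence of $X$ starts no later than the $(i-1)$-th right occurrence ends, and N\ref{item:less} says it starts strictly before the $i$-th right occurrence ends — so the cut after the $i$-th left $X$ falls inside the $i$-th (or a later) right block, meaning the right side has read strictly fewer copies of $x$ than $i$ while the left has read $i$; that discrepancy is the nonzero coefficient. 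A cleaner alternative I would fall back on if the direct counting is messy: induct on $n$, peeling off the first equation-like factor using the same cut-and-split manipulation used in the proof of Lemma~\ref{lem:normalform}, reducing to the case $n=1$ where the equation is $(X u_1, v_0 X v_1)$ with $|v_0|\ge 1$ and $|u_1|<|v_0 v_1|$; there a direct length-and-sum computation, or a direct application of Lemma~\ref{lem:one_pq} together with the known shape of the solution set, finishes it. Either way, the crux is purely that Condition~N\ref{item:less} creates a strict lag between the two sides, turning ``$\wordsum$ of a matched prefix pair'' into a nontrivial constraint on $\wordsum(x)$, which the empty solution then forces to be the trivial constraint $\wordsum(x)=0$.
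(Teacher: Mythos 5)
Your overall idea---linearize via the prefix-sum picture and exploit the strict lag created by Condition~N\ref{item:less} together with the two known zero-sum solutions (the empty one and the guaranteed nonempty one)---is exactly the right intuition, but the concrete mechanism you propose has a gap that the paper's proof is specifically designed to avoid. You want to cut the left-hand side at a clean boundary (end of $u_{i-1}$, start of the $i$-th $X$) and equate $\wordsum$ of the two matching prefixes to get a relation $as=b$. The problem is that the matching prefix on the right-hand side of the same length will, for most choices of $i$ and most solution lengths $|x|$, end \emph{inside} a copy of $x$. Then its sum is $\wordsum(v_0\dotsm v_{j-1})+(j-1)s+\wordsum(x'')$ where $x''$ is a proper prefix of $x$ whose length is itself a function of $|x|$; the term $\wordsum(x'')$ is not determined by $s=\wordsum(x)$, and the index $j$ shifts with $|x|$ as well. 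So you do not get a single linear equation $as=b$ with fixed $a,b$ that all solutions must satisfy; you get a family of relations with $|x|$-dependent coefficients and an extra unknown. The fallback you mention (shorter solutions are prefixes of longer ones) does not close this, because the length of $x''$ grows like $(i-j)|x|$ and eventually exceeds the length of the shortest nonempty solution, so the partial sum is genuinely new data for each $x$.

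The paper sidesteps exactly this by using the area $\area{\cdot}$, which is $\wordsum(\psw(w))$, i.e.\ the sum of $\wordsum$ over \emph{all} prefixes rather than one chosen prefix. Because area satisfies the clean product rule $\area{w_1\dotsm w_m}=\sum(\area{w_i}+\wordsum(w_1\dotsm w_{i-1})|w_i|)$, each full copy of $x$ on either side contributes $\area{x}$ plus an $|x|$-multiple of a constant-determined prefix sum, and no partial copy of $x$ ever appears. Subtracting the two sides gives an identity of the form $0=\alpha\,\wordsum(x)+\beta\,|x|$ with $\alpha,\beta$ depending only on the $u_i,v_i$; plugging in the nonempty zero-sum solution kills $\beta$, and Condition~N\ref{item:less} (after the Abel-type reindexing $\sum_i i(|u_i|-|v_i|)=\sum_i(|v_0\dotsm v_{i-1}|-|u_0\dotsm u_{i-1}|)>0$) makes $\alpha\neq 0$, forcing $\wordsum(x)=0$. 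So the paper's proof is the ``averaged'' or ``integrated'' version of your single-cut idea; the integration is precisely what removes the dependence on where the cut happens to land inside a copy of $x$. If you want to repair your argument, replace the single prefix comparison with a comparison of $\area{\cdot}$ (or, equivalently, sum your prefix-sum identities over all prefix lengths) and you will recover the paper's computation.
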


\begin{proof}
    Let the equation be \eqref{eq:normalform}.
    Let $u_i' = u_0 \dotsm u_{i - 1}$
    and $v_i' = v_0 \dotsm v_{i - 1}$ for all $i$.
    After applying a solution $[x]$ on the left-hand side
    and taking the area we get
    \begin{align*}
        &\area{u_0 x u_1 \dotsm x u_n} \\
        = &\sum_{i = 0}^n
            (\area{u_i} + \wordsum(u_0 x u_1 \dotsm u_{i - 1} x) |u_i|) +
            \sum_{i = 1}^n
            (\area{x} + \wordsum(u_0 x u_1 \dotsm x u_{i - 1}) |x|) \\
        = &\sum_{i = 0}^n
            (\area{u_i} + \wordsum(u_i') |u_i| + i \wordsum(x) |u_i|) +
            \sum_{i = 1}^n
            (\area{x} + \wordsum(u_i') |x| + (i - 1) \wordsum(x) |x|) \\
        = &\area{u_0 \dotsm u_n} + \wordsum(x) \sum_{i = 0}^n i |u_i| +
            n \area{x} + |x| \sum_{i = 1}^n \wordsum(u_i') +
            \frac{(n - 1) n}{2} \cdot \wordsum(x) |x|.
    \end{align*}
    We get a similar formula for $\area{v_0 x v_1 \dotsm x v_n}$.
    Because $u_0 x u_1 \dotsm x u_n = v_0 x v_1 \dotsm x v_n$, we get
    \begin{equation} \label{eq:areadiff}
    \begin{split}
        0 &= \area{u_0 x u_1 \dotsm x u_n} - \area{v_0 x v_1 \dotsm x v_n} \\
        &= \area{u_0 \dotsm u_n} - \area{v_0 \dotsm v_n} +
            \wordsum(x) \sum_{i = 0}^n i (|u_i| - |v_i|) +
            |x| \sum_{i = 1}^n (\wordsum(u_i') - \wordsum(v_i')) \\
        &= \wordsum(x) \sum_{i = 0}^n i (|u_i| - |v_i|) +
            |x| \sum_{i = 1}^n (\wordsum(u_i') - \wordsum(v_i')).
    \end{split}
    \end{equation}
    By the definition of normal form,
    the equation has a nonempty zero-sum solution $[x_1]$.
    Replacing $x$ by $x_1$ in \eqref{eq:areadiff} gives
    \begin{equation*}
        0 = |x_1| \sum_{i = 1}^n (\wordsum(u_i') - \wordsum(v_i')).
    \end{equation*}
    Because $|x_1| > 0$,
    \begin{math}
        \sum_{i = 1}^n (\wordsum(u_i') - \wordsum(v_i')) = 0.
    \end{math}
    Then \eqref{eq:areadiff} takes the form
    \begin{equation*}
        0 = \wordsum(x) \sum_{i = 0}^n i (|u_i| - |v_i|),
    \end{equation*}
    so either $\wordsum(x) = 0$ or
    \begin{math}
        \sum_{i = 0}^n i (|u_i| - |v_i|) = 0.
    \end{math}
    The latter is not possible, because
    \begin{align*}
        &\sum_{i = 0}^n i (|u_i| - |v_i|)
            = \sum_{i = 1}^n (|u_i \dotsm u_n| - |v_i \dotsm v_n|) \\
        = &\sum_{i = 1}^n
            (|u_0 \dotsm u_n| - |u_i'| - (|v_0 \dotsm v_n| - |v_i'|))
            = \sum_{i = 1}^n (-|u_i'| + |v_i'|) > 0,
    \end{align*}
    by Condition~N\ref{item:less} in the definition of normal form.
    Thus every solution $[x]$ is zero-sum.
\end{proof}

\begin{lemma} \label{lem:st}
    Consider the nontrivial equation \eqref{eq:normalform}.
    Let $s_i = \wordsum(u_0 \dotsm u_{i - 1})$
    and $t_i = \wordsum(v_0 \dotsm v_{i - 1})$ for all $i$.
    If the equation has at least two zero-sum solutions,
    then $(s_1, \dots, s_n)$ is a permutation of $(t_1, \dots, t_n)$.
\end{lemma}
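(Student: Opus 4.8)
The plan is to apply the polynomial version of the sums-of-words machinery from~\cite{Sa17stacs}. To each word $w = c_1 \dotsm c_m$ over $\R$ I would attach the formal sum $f_w = T^{c_1} + \dots + T^{c_m}$, viewed as an element of the ring $\mathbb{Z}[\R]$ of finite $\mathbb{Z}$-linear combinations of symbols $T^r$ ($r \in \R$), with multiplication determined by $T^r T^{r'} = T^{r + r'}$. This ring is an integral domain because $\R$ is a torsion-free (indeed totally ordered) abelian group: ordering the exponents, the top term of a product of two nonzero elements is the product of their top terms, hence nonzero. The map $f$ is additive over concatenation, it satisfies $f_{\psw_r(w)} = T^r f_w$, and evaluating $f_w$ at $T = 1$ returns $|w|$.

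First I would rewrite the prefix sum word of a solution using the product formula for $\psw$. Since every solution $[x]$ under consideration is zero-sum by hypothesis, each inserted copy of $x$ leaves the running sum unchanged, so
\[
    \psw(u_0 x u_1 \dotsm x u_n) = \psw_{s_0}(u_0) \prod_{i = 1}^n \bigl( \psw_{s_i}(x)\, \psw_{s_i}(u_i) \bigr),
\]
with $s_0 = 0$, and an analogous identity holds for the right-hand side with $v_i$ in place of $u_i$ and $t_i$ in place of $s_i$ (and $t_0 = 0$). Applying $f$, using its additivity and the rule $f_{\psw_r(w)} = T^r f_w$, and using that $[x]$ is a solution (so the two prefix sum words coincide), I get
\[
    f_{\psw(x)} \Bigl( \sum_{i = 1}^n T^{s_i} - \sum_{i = 1}^n T^{t_i} \Bigr) = \sum_{i = 0}^n T^{t_i} f_{\psw(v_i)} - \sum_{i = 0}^n T^{s_i} f_{\psw(u_i)},
\]
where the right-hand side does not depend on $x$.

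Now I would use the hypothesis that there are at least two zero-sum solutions: pick distinct zero-sum solution words $y$ and $z$. Since the equation is nontrivial, distinct solution words have distinct lengths (the shorter one is a prefix and a suffix of the longer one), so $|y| \ne |z|$; hence $f_{\psw(y)} - f_{\psw(z)}$ evaluates to $|y| - |z| \ne 0$ at $T = 1$ and is in particular a nonzero element of $\mathbb{Z}[\R]$. Subtracting the displayed identity for $x = z$ from the one for $x = y$ (the right-hand sides cancel) yields
\[
    \bigl( f_{\psw(y)} - f_{\psw(z)} \bigr) \Bigl( \sum_{i = 1}^n T^{s_i} - \sum_{i = 1}^n T^{t_i} \Bigr) = 0.
\]
As $\mathbb{Z}[\R]$ is a domain and the first factor is nonzero, $\sum_{i = 1}^n T^{s_i} = \sum_{i = 1}^n T^{t_i}$, which says precisely that the multisets $\{s_1, \dots, s_n\}$ and $\{t_1, \dots, t_n\}$ are equal, i.e.\ $(t_1, \dots, t_n)$ is a permutation of $(s_1, \dots, s_n)$.

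I expect the only point needing real care is the computation of $\psw(u_0 x u_1 \dotsm x u_n)$: the shift in front of $\psw(u_i)$ and in front of the $i$-th copy of $\psw(x)$ must come out to be exactly $s_i$, with no extra multiple of $\wordsum(x)$, and this is exactly where the zero-sum assumption on the solutions is used. Everything afterwards is formal manipulation in $\mathbb{Z}[\R]$, together with the elementary fact that this ring is an integral domain.
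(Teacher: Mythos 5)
Your proof is correct, and it follows a genuinely different route from the paper's. The paper argues combinatorially: it sorts $(s_1,\dots,s_n)$ and $(t_1,\dots,t_n)$, picks the largest index $j$ where the sorted tuples disagree (WLOG $s_j'>t_j'$), sets $a=\height{x}+s_j'$, and counts occurrences of the single letter $a$ in the four prefix sum words coming from the two sides of the equation applied to the two zero-sum solutions $[x],[y]$; the choice of $a$ makes the $t$-terms vanish and the prefix/suffix relationship between $y$ and $x$ makes the remaining sum strictly positive, a contradiction. You instead encode all letter-multiplicities at once into the group ring $\mathbb{Z}[\R]$ via $w\mapsto f_{\psw(w)}$, observe that the factorisation $\psw(u_0xu_1\dotsm xu_n)=\psw_{s_0}(u_0)\prod_{i=1}^n\psw_{s_i}(x)\psw_{s_i}(u_i)$ (valid because $x$ is zero-sum) linearises to $f_{\psw(x)}\bigl(\sum T^{s_i}-\sum T^{t_i}\bigr)=C$ with $C$ independent of $x$, subtract the identities for the two solutions, and use that $\mathbb{Z}[\R]$ is a domain (leading-coefficient argument in a totally ordered group) together with $|y|\ne|z|$ (so $f_{\psw(y)}-f_{\psw(z)}$ has nonzero augmentation) to conclude $\sum T^{s_i}=\sum T^{t_i}$, i.e.\ the multisets agree. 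Your argument is slicker in that it avoids the choice of a specific critical value $a$ and the sorted-tuple bookkeeping, and it does not even need the prefix/suffix monotonicity of letter counts between the two solutions --- only the length difference. One typographical slip: you state the shift rule as $f_{\psw_r(w)}=T^rf_w$, but the rule you actually use (and that makes the displayed identity correct, with the factors $f_{\psw(u_i)}$ etc.) is $f_{\psw_r(w)}=T^r f_{\psw(w)}$; this does not affect the argument.
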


\begin{proof}
    Let $[x]$ and $[y]$ be two zero-sum solutions and let $|x| > |y|$.
    Because $y$ is a prefix and a suffix of $x$,
    also $\psw_r(y)$ is a prefix and a suffix of $\psw_r(x)$ for every $r$.
    Consequently, every letter that appears in $\psw_r(y)$
    appears more often in $\psw_r(x)$.
    Let $(s_1', \dots, s_n')$ be the permutation of $(s_1, \dots, s_n)$
    such that $s_i' \leq s_{i + 1}'$ for all $i$,
    and let $(t_1', \dots, t_n')$ be the permutation of $(t_1, \dots, t_n)$
    such that $t_i' \leq t_{i + 1}'$ for all $i$.
    Let $j$ be the largest index such that $s_j' \ne t_j'$
    (if there is no such index, then we have proved the lemma).
    Without loss of generality, let $s_j' > t_j'$.
    Let $a = \height{x} + s_j'$.
    If the number of occurrences of $a$ in any word $w$ is denoted by $|w|_a$,
    then
    \begin{align}
        0 = &|\psw(u_0 x u_1 \dotsm x u_n)|_a
            - |\psw(v_0 x v_1 \dotsm x v_n)|_a \nonumber \\
            &- |\psw(u_0 y u_1 \dotsm y u_n)|_a
            + |\psw(v_0 y v_1 \dotsm y v_n)|_a \label{eq:st1} \\
        = &\sum_{i = 1}^n (|\psw_{s_i'}(x)|_a - |\psw_{t_i'}(x)|_a
            - |\psw_{s_i'}(y)|_a + |\psw_{t_i'}(y)|_a) \label{eq:st2} \\
        = &\sum_{i = 1}^j (|\psw_{s_i'}(x)|_a - |\psw_{t_i'}(x)|_a
            - |\psw_{s_i'}(y)|_a + |\psw_{t_i'}(y)|_a) \label{eq:st3} \\
        = &\sum_{i = 1}^j (|\psw_{s_i'}(x)|_a - |\psw_{s_i'}(y)|_a)
            \label{eq:st4} \\
        \geq &|\psw_{s_j'}(x)|_a - |\psw_{s_j'}(y)|_a > 0, \label{eq:st5}
    \end{align}
    a contradiction.
    Here, \eqref{eq:st1} follows from $x$ and $y$ being solution words,
    \eqref{eq:st2} from them being zero-sum,
    \eqref{eq:st3} from the definition of $j$,
    \eqref{eq:st4} from
    \begin{math}
        a > \height{x} + t_j' \geq \height{x} + t_i' \geq \height{y} + t_i'
    \end{math}
    for all $i \in \{1, \dots, j\}$,
    and \eqref{eq:st5} from $|\psw_{s_j'}(x)|_a > 0$
    and the fact that for all $r$, every letter that appears in $\psw_r(y)$
    appears more often in $\psw_r(x)$.
\end{proof}

\begin{lemma} \label{lem:height}
    Let \eqref{eq:normalform} be an equation in normal form.
    Let
    \begin{equation} \label{eq:height}
        h = \height{u_0 \dotsm u_n}
        - \max\{\wordsum(u_0 \dotsm u_i) \mid i \in \{0, \dots, n - 1\}\}.
    \end{equation}
    If the equation has at least three nonempty solutions,
    then every nonempty solution is of height $h$.
    If the equation has two nonempty solutions,
    then the shorter one is of height $h$
    and the longer one of height at least $h$.
\end{lemma}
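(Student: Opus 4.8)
The plan is to first compute $\height{W_z}$ for every nonempty solution $z$, where $W_z$ denotes the common value of the two sides of \eqref{eq:normalform} when $X$ is replaced by $z$, and then to extract the bounds on $\height{z}$ from the way the $n$ occurrences of $z$ on the left of $W_z$ must overlap the $n$ occurrences of $z$ on the right. Fix such a $z$; it is zero-sum by Lemma~\ref{lem:zerosum}. Writing $s_i = \wordsum(u_0 \cdots u_{i-1})$ and $t_i = \wordsum(v_0 \cdots v_{i-1})$, the concatenation formula for heights applied to $W_z = u_0\, z\, u_1 \cdots z\, u_n$, together with $\wordsum(z) = 0$, gives
\begin{equation*}
    \height{W_z} = \max\bigl\{\, \height{u_0 \cdots u_n},\ \max\{s_i \mid 1 \le i \le n\} + \height{z} \,\bigr\},
\end{equation*}
and the same computation on the right-hand side gives the analogous formula with the $v_i$ and the $t_i$. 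Since $[\eps]$ is a solution (Condition~N\ref{item:emptyzero}), $u_0 \cdots u_n = v_0 \cdots v_n$, so the first entries coincide and equal $H := \height{u_0 \cdots u_n}$; and since the equation has at least two zero-sum solutions, Lemma~\ref{lem:st} makes $(s_1,\dots,s_n)$ a permutation of $(t_1,\dots,t_n)$, so the second entries coincide and equal $M + \height{z}$ with $M := \max\{s_i \mid 1 \le i \le n\}$. Hence $\height{W_z} = \max\{H,\, M + \height{z}\}$ for every nonempty solution $z$, and by definition $h = H - M$; in particular $\height{z} \ge h$ is equivalent to $\height{W_z} = M + \height{z}$, while $\height{z} = h$ is equivalent to $\height{W_z} = H$ with $\psw(W_z)$ attaining the value $H$ inside some copy of $z$.

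Now I would bring in the normal form: Condition~N\ref{item:less} gives $|u_0 \cdots u_{i-1}| < |v_0 \cdots v_{i-1}|$ for all $i \in \{1,\dots,n\}$, so the $i$-th occurrence of $z$ on the left of $W_z$ begins strictly left of the $i$-th occurrence on the right, Condition~N\ref{item:leq} bounds the constant material separating consecutive occurrences, and $u_0 = v_n = \eps$, $v_0 \ne \eps$. Let $z_0$ be the shortest nonempty solution; since the equation is nontrivial, every solution word is a prefix and a suffix of each longer one, hence $\height{z} \le \height{z'}$ whenever $z, z'$ are solution words with $|z| \le |z'|$. The heart of the proof is $\height{z_0} = h$. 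For $\height{z_0} \le h$ I would argue by contradiction: if $\height{z_0} > h$ then $\height{W_{z_0}} = M + \height{z_0} > H$, so every position of $\psw(W_{z_0})$ whose value exceeds $H$ must lie inside a copy of $z_0$ in the left factorisation and inside a copy of $z_0$ in the right factorisation at the same time; following the leftmost such position and using that the left copies are shifted strictly left of the right copies (N\ref{item:less}) while the intervening constants are short (N\ref{item:leq}) forces the prefix of $z_0$ up to its highest point to agree with a shift of itself, which yields a proper period of $z_0$ and, through the equation, a strictly shorter solution, contradicting minimality. The reverse inequality $\height{z_0} \ge h$ is dual: a highest point of $u_0 \cdots u_n$ sits inside some $u_j$, and tracking the corresponding position of $\psw(W_{z_0})$ through the right factorisation forces it into a copy of $z_0$, so $\height{z_0} \ge H - M = h$. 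Monotonicity then gives $\height{z} \ge \height{z_0} = h$ for every nonempty solution $z$, which settles the case of two nonempty solutions.

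For the case of at least three nonempty solutions I would rerun the positional analysis on the longest solution $x$: if $\height{x} > h$ then the positions of $\psw(W_x)$ exceeding $H$ again lie in copies of $x$ on both sides, but now $x$ has two distinct shorter nonempty solution words among its prefixes (hence also among its suffixes), and this rigidity leaves no room for any position of $\psw(W_x)$ to exceed $H$, so $\height{x} = h$ and monotonicity forces $\height{z} = h$ for all nonempty $z$. I expect the positional step in the two ``$\le h$'' arguments to be the real obstacle: turning ``the tall positions of $\psw(W_z)$ must simultaneously lie inside a left copy and a right copy of $z$, placed as Conditions~N\ref{item:less} and N\ref{item:leq} dictate'' into a genuine periodicity of $z$ that contradicts either minimality of $z_0$ or the presence of a third solution; the remaining steps are routine manipulations with the concatenation formulas.
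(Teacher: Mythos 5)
There is a genuine gap, and it sits exactly where you predicted: the positional step. Your plan hinges on the claim that the shortest nonempty solution $z_0$ satisfies $\height{z_0} \le h$, and your sketched argument for it (tall positions of $\psw(W_{z_0})$ force a self-overlap of $z_0$, hence a proper period, hence ``through the equation, a strictly shorter solution'') uses only the minimality of $z_0$ and never the existence of a second nonempty solution. That statement is false under those hypotheses alone: the equation $(XaXbab, abaXbX)$ from Example~\ref{exa:equations2} is in normal form (take $a = 1$, $b = -1$; Conditions~N\ref{item:emptyzero}--N\ref{item:leq} are easily checked), its solutions are exactly $[\eps]$ and $[ab]$, and here $h = \height{abab} - \max\{0, \wordsum(a)\} = 1 - 1 = 0$ while $\height{ab} = 1 > h$. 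So no argument of the shape you describe can close; any correct proof of ``the shorter of two nonempty solutions has height $h$'' must use the longer solution. In addition, the step ``a proper period of $z_0$ yields, through the equation, a strictly shorter solution'' is unjustified: solution sets of one-variable equations with finitely many solutions are not closed under taking periods or roots of solution words, so a period of $z_0$ gives no new solution and no contradiction with minimality. The three-solution case is likewise settled only by the phrase ``this rigidity leaves no room,'' which is not an argument.

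For comparison, the paper's proof supplies precisely the missing mechanism, and it is a counting argument rather than a periodicity argument. For each solution $[x]$ one looks at $\phi(w)$, the shortest prefix of $w = u_0 x u_1 \dotsm x u_n = v_0 x v_1 \dotsm x v_n$ attaining $\height{w}$; this prefix is the same computed from either side, and its length is an affine function of $|x|$ whose slope (how many copies of $x$ it contains on each side) depends only on whether the first maximum lies in a constant block or in a copy of $x$, i.e.\ only on the comparison of $\height{x}$ with $h$ and on four fixed indices $i, j, k, l$. Conditions~N\ref{item:less} and N\ref{item:leq} force the two slopes to differ in the relevant configurations, so each configuration admits at most one value of $|x|$; counting configurations then shows that solutions of height $< h$ must be empty, at most one solution has height $> h$, and if one does, at most one has height $h$ -- which yields both assertions of the lemma. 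Your first paragraph (the formula $\height{W_z} = \max\{H, M + \height{z}\}$ via Lemma~\ref{lem:st}) is correct but carries no information, since both sides of the equation give the same expression; the lemma's content lies entirely in the positional counting you left open.
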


\begin{proof}
    The idea of the proof
    is to look at the first occurrences of the highest points on the curves
    of the left-hand side and the right-hand side of the equation;
    these must match.
    If the length of the solution changes,
    these first occurrences often move with respect to each other
    so that they no longer match;
    this puts a limit on the number of solutions under certain conditions.
    A first occurrence can be either inside a constant part
    or inside a variable.
    We will see that
    if the first occurrences are inside constant parts on both sides,
    then the solution is empty,
    if they are inside variables on both sides,
    then the solution is of height at least $h$
    and there can be at most one solution of height more than $h$,
    and if the first occurrence is inside a constant part on one side
    and inside a variable on the other side,
    then the solution is of height $h$,
    and if there is a solution of height more than $h$,
    then there can be at most one solution of height $h$.

    For any word $w$, let $\phi(w)$ be its shortest prefix such that
    \begin{math}
        \height{\phi(w)} = \height{w}.
    \end{math}
    For any solution $[x]$, we have
    \begin{equation} \label{eq:phi}
        \phi(u_0 x u_1 \dotsm x u_n) = \phi(v_0 x v_1 \dotsm x v_n).
    \end{equation}
    Let $s_i = \wordsum(u_0 \dotsm u_{i - 1})$
    and $t_i = \wordsum(v_0 \dotsm v_{i - 1})$ for all $i$.
    Let $i$ and $j$ be such that
    $\phi(u_0 \dotsm u_n) = u_0 \dotsm u_{i - 1} \phi(u_i)$ and
    $\phi(v_0 \dotsm v_n) = v_0 \dotsm v_{j - 1} \phi(v_j)$.
    Because $[\eps]$ is a solution,
    $\phi(u_0 \dotsm u_n) = \phi(v_0 \dotsm v_n)$ and thus
    \begin{equation} \label{eq:lowmatch}
        |u_0 \dotsm u_{i - 1}| + |\phi(u_i)|
        = |v_0 \dotsm v_{j - 1}| + |\phi(v_j)|.
    \end{equation}
    By \eqref{eq:lowmatch}
    and Condition~N\ref{item:leq} in the definition of normal form, $i > j$.

    Because $[\eps]$ is a solution,
    \begin{math}
        \height{u_0 \dotsm u_n} = \height{v_0 \dotsm v_n},
    \end{math}
    and by Lemma~\ref{lem:st},
    \begin{equation*}
        \max\{\wordsum(u_0 \dotsm u_i) \mid i \in \{0, \dots, n - 1\}\}
        = \max\{\wordsum(v_0 \dotsm v_i) \mid i \in \{0, \dots, n - 1\}\},
    \end{equation*}
    so
    \begin{equation*}
        h = \height{v_0 \dotsm v_n}
        - \max\{\wordsum(v_0 \dotsm v_i) \mid i \in \{0, \dots, n - 1\}\}.
    \end{equation*}
    Let $k$ and $l$ be the smallest indices such that
    $s_k = \max\{s_1, \dots, s_n\}$ and
    $t_l = \max\{t_1, \dots, t_n\}$.
    Then
    \begin{align*}
        \phi(u_0 x u_1 \dotsm x u_n) &= \begin{cases}
            u_0 x u_1 \dotsm u_{i - 1} x \phi(u_i)
            &\text{if $\height{x} < h$
                or if $\height{x} = h$ and $i < k$}, \\
            u_0 x u_1 \dotsm x u_{k - 1} \phi(x)
            &\text{if $\height{x} > h$
                or if $\height{x} = h$ and $i \geq k$},
        \end{cases} \\
        \phi(v_0 x v_1 \dotsm x v_n) &= \begin{cases}
            v_0 x v_1 \dotsm v_{j - 1} x \phi(v_j)
            &\text{if $\height{x} < h$
                or if $\height{x} = h$ and $j < l$}, \\
            v_0 x v_1 \dotsm x v_{l - 1} \phi(x)
            &\text{if $\height{x} > h$
                or if $\height{x} = h$ and $j \geq l$},
        \end{cases}
    \end{align*}
    This means that, for a given $x$,
    \eqref{eq:phi} can take one of four possible forms:
    \begin{enumerate}[(i)]
        \item
        If $\height{x} < h$ or if $\height{x} = h$, $i < k$ and $j < l$, then
        \begin{equation*}
            u_0 x u_1 \dotsm u_{i - 1} x \phi(u_i)
            = v_0 x v_1 \dotsm v_{j - 1} x \phi(v_j)
        \end{equation*}
        and thus
        \begin{equation*}
            |u_0 \dotsm u_{i - 1}| + |\phi(u_i)| + (i - j) |x|
            = |v_0 \dotsm v_{j - 1}| + |\phi(v_j)|.
        \end{equation*}
        Because $i > j$,
        it follows that this equality can hold for at most one $|x|$,
        so there is only one possible $x$ in this case, namely, the empty word.
        \item
        If $\height{x} = h$, $i < k$ and $j \geq l$, then
        \begin{equation*}
            u_0 x u_1 \dotsm u_{i - 1} x \phi(u_i)
            = v_0 x v_1 \dotsm x v_{l - 1} \phi(x),
        \end{equation*}
        but
        \begin{align*}
            &|u_0 x u_1 \dotsm u_{i - 1} x \phi(u_i)|
            = |u_0 \dotsm u_{i - 1}| + |\phi(u_i)| + i|x|
            = |v_0 \dotsm v_{j - 1}| + |\phi(v_j)| + i|x| \\
            >& |v_0 \dotsm v_{l - 1}| + l|x|
            \geq |v_0 x v_1 \dotsm x v_{l - 1} \phi(x)|
        \end{align*}
        by \eqref{eq:lowmatch} and $i > j \geq l$, a contradiction.
        \item \label{item:3}
        If $\height{x} > h$ or if $\height{x} = h$, $i \geq k$ and $j \geq l$,
        then
        \begin{equation*}
            u_0 x u_1 \dotsm x u_{k - 1} \phi(x)
            = v_0 x v_1 \dotsm x v_{l - 1} \phi(x)
        \end{equation*}
        and thus
        \begin{equation*}
            |u_0 \dotsm u_{k - 1}| + (k - l) |x|
            = |v_0 \dotsm v_{l - 1}|.
        \end{equation*}
        By Condition~N\ref{item:less} in the definition of normal form, $k > l$.
        It follows that this equality can hold for at most one $|x|$,
        so there is only one possible $x$ in this case.
        \item
        If $\height{x} = h$, $i \geq k$ and $j < l$, then
        \begin{equation*}
            u_0 x u_1 \dotsm x u_{k - 1} \phi(x)
            = v_0 x v_1 \dotsm v_{j - 1} x \phi(v_j)
        \end{equation*}
        and thus
        \begin{equation} \label{eq:item4}
            |u_0 \dotsm u_{k - 1}| + |\phi(x)| + (k - 1 - j) |x|
            = |v_0 \dotsm v_{j - 1}| + |\phi(v_j)|.
        \end{equation}
        If $x$ and $x'$ are solution words,
        then one of them is a prefix of the other,
        so if they have the same height, then $\phi(x) = \phi(x')$.
        Therefore, \eqref{eq:item4} can hold for more than one
        solution word $x$ of height $h$ only if $k - 1 - j = 0$.
        In general, this can happen
        (for example, if the equation has infinitely many solutions).
        However, if there exists a solution word of height more than $h$,
        then it follows from Case \eqref{item:3} that $k > l$.
        Then $j < l < k$, so $k - 1 > j$
        and there is at most one solution word $x$ of height $h$.
        \qedhere
    \end{enumerate}
\end{proof}

\begin{example}
    Consider the equation
    \begin{equation*}
        (X aXb X aabb ab aXb abaabbab, abaabbab aXb ab aabb X aXb X)
    \end{equation*}
    that was mentioned in Example~\ref{exa:equations2}.
    Let $a = 1$ and $b = -1$.
    The equation has exactly three solutions $[\eps], [ab], [abaabbab]$.
    All of them are zero-sum,
    and their heights are $-\infty$, 1, 2, respectively.
    If we use the notation of the proof of Lemma~\ref{lem:height},
    then $i = 3$, $j = 0$, $k = 2$, $l = 1$, and $h = 1$.
    We have $\phi(u_i) = \phi(aabb ab a) = aa$,
    $\phi(v_j) = \phi(abaabbab a) = abaa$,
    $\phi(ab) = a$, and $\phi(abaabbab) = abaa$.
    Then
    \begin{align*}
        \phi(x axb x aabb ab axb abaabbab) &= \begin{cases}
            x axb x aa
            &\text{if $x = \eps$}, \\
            x a \phi(x)
            &\text{if $x = abaabbab$ or if $x = ab$},
        \end{cases} \\
        \phi(abaabbab axb ab aabb x axb x) &= \begin{cases}
            abaa
            &\text{if $x = \eps$ or if $x = ab$}, \\
            abaabbab a \phi(x)
            &\text{if $x = abaabbab$}.
        \end{cases}
    \end{align*}
\end{example}

\section{Some Lemmas}

In this section, we state many lemmas about one-variable equations
that will be used in the proof of the main result.

A subset $Z$ of $\Gamma^*$ is called a \emph{code}
if the elements of $Z$ do not satisfy any nontrivial relations.
In other words, $Z$ is a code if and only if
for all $x_1, \dots, x_m, y_1, \dots, y_n \in Z$,
$x_1 \dotsm x_m = y_1 \dotsm y_n$ implies
$m = n$ and $x_i = y_i$ for all $i \in \{1, \dots, m\}$.
If $Z$ is a code, then $Z^*$ is a free monoid,
and if $\Delta$ is an alphabet of the same size as $Z$,
then the free monoids $Z^*$ and $\Delta^*$ are isomorphic.
More information about codes can be found
in the book of Berstel, Perrin and Reutenauer~\cite{BePeRe10}.

The next lemma can be used to compress an equation into a shorter one.
We will use it with two codes $Z$:
The set of all minimal zero-sum words
(those zero-sum words
which cannot be written as a product of two shorter zero-sum words),
and the set of words of a specific length.

\begin{lemma} \label{lem:code}
    Let $E$ be the equation \eqref{eq:normalform} and let $Z$ be a code.
    If $u_i, v_i \in Z^*$ for all $i$,
    then there exists an alphabet $\Delta$
    and an isomorphism $h: Z^* \to \Delta^*$,
    and the equation
    \begin{equation} \label{eq:compressedeq}
        (h(u_0) X h(u_1) \dotsm X h(u_n), h(v_0) X h(v_1) \dotsm X h(v_n))
    \end{equation}
    has the solution set
    \begin{math}
        \{[h(x)] \mid [x] \in \sol(E), \ x \in Z^*\}.
    \end{math}
\end{lemma}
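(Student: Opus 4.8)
The plan is to show that applying the isomorphism $h$ to the constant parts of the equation sets up a bijection between those solutions of $E$ whose solution word lies in $Z^*$ and the solutions of the compressed equation~\eqref{eq:compressedeq}. First I would fix an alphabet $\Delta$ of the same cardinality as $Z$ (recall from the discussion after the definition of a code that $Z^*$ is then isomorphic to $\Delta^*$), and let $h \colon Z^* \to \Delta^*$ be such an isomorphism. The key observation is that $h$, being a monoid isomorphism, respects concatenation, so for any word $x \in Z^*$ we have
\begin{equation*}
    h(u_0 x u_1 \dotsm x u_n) = h(u_0)\, h(x)\, h(u_1) \dotsm h(x)\, h(u_n),
\end{equation*}
and similarly for the right-hand side, because $u_0 x u_1 \dotsm x u_n$ and $v_0 x v_1 \dotsm x v_n$ are themselves elements of $Z^*$ (each $u_i, v_i \in Z^*$ by hypothesis, and $x \in Z^*$).

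Next I would carry out the two directions. If $[x] \in \sol(E)$ with $x \in Z^*$, then $u_0 x u_1 \dotsm x u_n = v_0 x v_1 \dotsm x v_n$ in $\Gamma^*$, hence applying $h$ and using the displayed factorization gives that $[h(x)]$ solves~\eqref{eq:compressedeq}; this shows one inclusion. Conversely, suppose $[y]$ solves~\eqref{eq:compressedeq} for some $y \in \Delta^*$. Since $h$ is surjective onto $\Delta^*$, write $y = h(x)$ for a unique $x \in Z^*$. Then $h(u_0 x u_1 \dotsm x u_n) = h(v_0 x v_1 \dotsm x v_n)$, and because $h$ is injective on $Z^*$ we may cancel it to obtain $u_0 x u_1 \dotsm x u_n = v_0 x v_1 \dotsm x v_n$, i.e.\ $[x] \in \sol(E)$ with $x \in Z^*$. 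Combining the two directions yields exactly
\begin{math}
    \sol(\eqref{eq:compressedeq}) = \{[h(x)] \mid [x] \in \sol(E),\ x \in Z^*\}.
\end{math}

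I do not expect a serious obstacle here; the lemma is essentially a bookkeeping statement and everything reduces to the fact that an isomorphism of free monoids is a length-type-respecting bijection compatible with products. The one point that needs a little care is the implicit claim that $\Delta^*$ exactly captures all solution words of~\eqref{eq:compressedeq}: a priori a solution of the compressed equation could be a word of $\Delta^*$ that does not lie in the image $h(Z^*)$ — but since $h$ is an isomorphism onto $\Delta^*$, its image is all of $\Delta^*$, so this cannot happen. It is worth noting explicitly, as the paragraph before the lemma does, that the two intended applications (the code of minimal zero-sum words, and the code of all words of a fixed length $\ell$) genuinely are codes, so that the hypothesis "$Z$ is a code" is met; for minimal zero-sum words this is the freeness statement already proved, and for fixed-length words it is immediate. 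With that, the proof is complete.
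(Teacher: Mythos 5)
Your proof is correct and follows essentially the same approach as the paper: obtain the isomorphism $h\colon Z^*\to\Delta^*$ from the definition of a code, apply $h$ to a solution word $x\in Z^*$ using the morphism property to get one inclusion, and use surjectivity plus injectivity of $h$ for the converse. The closing remarks about the two intended applications being codes are accurate context but not needed for the lemma itself.
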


\begin{proof}
    There exists an alphabet $\Delta$
    and an isomorphism $h: Z^* \to \Delta^*$ by the definition of code.
    If $x \in Z^*$ is a solution word of $E$, then
    \begin{align*}
        h(u_0) h(x) h(u_1) \dotsm h(x) h(u_n)
        &= h(u_0 x u_1 \dotsm x u_n) \\
        &= h(v_0 x v_1 \dotsm x v_n)
        = h(v_0) h(x) h(v_1) \dotsm h(x) h(v_n),
    \end{align*}
    so $[h(x)]$ is a solution of \eqref{eq:compressedeq}.
    On the other hand, if $[y]$ is a solution of \eqref{eq:compressedeq},
    then there exists $x \in Z^*$ such that $h(x) = y$, and
    \begin{align*}
        h(u_0 x u_1 \dotsm x u_n)
        &= h(u_0) y h(u_1) \dotsm y h(u_n) \\
        &= h(v_0) y h(v_1) \dotsm y h(v_n)
        = h(v_0 x v_1 \dotsm x v_n),
    \end{align*}
    so $u_0 x u_1 \dotsm x u_n = v_0 x v_1 \dotsm x v_n$
    and $[x]$ is a solution of $E$.
    This completes the proof.
\end{proof}

Note that the equation $E$ in Lemma~\ref{lem:code}
can have solution words that are not in $Z^*$,
so \eqref{eq:compressedeq} can have less solutions than $E$.

The next lemma can be used to cut off part of an equation
so that all solutions are preserved, except possibly the empty solution
(and maybe some additional solutions are added).

\begin{lemma} \label{lem:cut}
    Consider the equation \eqref{eq:normalform}.
    Let $k \in \{0, \dots, n\}$ and let
    \begin{equation*}
        d = |v_0 \dotsm v_{k - 1}| - |u_0 \dotsm u_k| \geq 0.
    \end{equation*}
    If all nonempty solutions of the equation are of length at least $d$,
    and if $y$ is the common prefix of length $d$
    of all nonempty solution words,
    then each one of the nonempty solutions is a solution of the equation
    \begin{equation} \label{eq:cut}
        (u_0 X u_1 \dotsm X u_k y, v_0 X v_1 \dotsm v_{k - 1} X).
    \end{equation}
\end{lemma}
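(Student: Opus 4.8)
The plan is to take an arbitrary nonempty solution word $x$ of \eqref{eq:normalform} and verify directly that $[x]$ satisfies \eqref{eq:cut}, by cutting the word equality $u_0 x u_1 \dotsm x u_n = v_0 x v_1 \dotsm x v_n$ at a carefully chosen position. First I would locate the splitting point: apply $[x]$ to both sides and look at the prefixes of length $\ell = |v_0 x v_1 \dotsm v_{k-1} x|$. On the right-hand side this prefix is exactly $v_0 x v_1 \dotsm v_{k-1} x$; on the left-hand side, I claim it is exactly $u_0 x u_1 \dotsm x u_k y'$, where $y'$ is the prefix of $x$ of length $\ell - |u_0 x u_1 \dotsm x u_k| = \ell - (|u_0 \dotsm u_k| + (k+1)|x|)$. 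Here I would compute that this length equals $(|v_0 \dotsm v_{k-1}| + k|x|) + |x| - |u_0 \dotsm u_k| - (k+1)|x| = |v_0 \dotsm v_{k-1}| - |u_0 \dotsm u_k| = d$, using that the equation is in the form \eqref{eq:normalform} with matching occurrences of $X$. So $y' = y$ by the hypothesis that $y$ is the common prefix of length $d$ of all nonempty solution words (and the bound $|x| \ge d$ guarantees this prefix exists). Since a prefix of a word is determined by its length, the two length-$\ell$ prefixes of the two equal words coincide, which gives precisely
\begin{equation*}
    u_0 x u_1 \dotsm x u_k y = v_0 x v_1 \dotsm v_{k-1} x,
\end{equation*}
i.e.\ $[x]$ is a solution of \eqref{eq:cut}.

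The main point requiring care is checking that the chosen prefix on the left-hand side really ends \emph{inside} the $(k+1)$-st copy of $x$ (so that it genuinely has the form $u_0 x u_1 \dotsm x u_k$ followed by a prefix of the next $x$), rather than ending inside some constant block $u_{k+1}, \dots$ or before. This is where $d \ge 0$ enters: $\ell = |u_0 \dotsm u_k| + (k+1)|x| + d \ge |u_0 \dotsm u_k| + (k+1)|x|$, so the prefix reaches at least to the end of $u_k$ and the start of the $(k+1)$-st $X$; and $d \le |x|$ (which follows from $|x| \ge d$) ensures it does not overshoot that copy of $x$. I would spell this out explicitly, since the whole argument hinges on it.

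Finally I would note two things the statement already anticipates: the empty solution $[\eps]$ need not survive, because for $x = \eps$ the prefix-length computation degenerates and the conclusion $u_0 \dotsm u_k = v_0 \dotsm v_{k-1}$ need not hold; and \eqref{eq:cut} may acquire extra solutions, since the converse direction (a solution of \eqref{eq:cut} being a solution of \eqref{eq:normalform}) is not claimed and indeed can fail. So the lemma asserts only the one-directional inclusion $\sol(\text{\eqref{eq:normalform}}) \setminus \{[\eps]\} \subseteq \sol(\text{\eqref{eq:cut}})$, and the proof above establishes exactly that.
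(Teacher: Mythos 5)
Your proof is correct and takes essentially the same approach as the paper: both observe that $u_0 x u_1 \dotsm x u_k y$ and $v_0 x v_1 \dotsm v_{k-1} x$ are prefixes of equal length of the two sides of $u_0 x u_1 \dotsm x u_n = v_0 x v_1 \dotsm x v_n$, hence equal. One harmless arithmetic slip: $u_0 x u_1 \dotsm x u_k$ contains $k$ copies of $x$ (not $k+1$) and $v_0 x v_1 \dotsm v_{k-1} x$ likewise contains $k$ copies, so both of your $(k+1)|x|$ terms should be $k|x|$; the two errors cancel and your final value $|y'| = d$ is still correct.
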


\begin{proof}
    If $h$ is a nonempty solution of \eqref{eq:normalform}, then
    \begin{equation*}
        h(u_0 X u_1 \dotsm X u_n) = h(v_0 X v_1 \dotsm X v_n).
    \end{equation*}
    Here the left-hand side has a prefix
    \begin{math}
        h(u_0 X u_1 \dotsm X u_k y)
    \end{math}
    and the right-hand side has a prefix
    \begin{math}
        h(v_0 X v_1 \dotsm v_{k - 1} X).
    \end{math}
    These prefixes are of the same length, so they are equal.
    Thus $h$ is a solution of \eqref{eq:cut}.
\end{proof}


Using Lemma \ref{lem:cut} requires the existence of a suitable index $k$.
The next two lemmas can sometimes be used to find such an index.
The proof of Lemma~\ref{lem:cutpoint}
is somewhat similar to the proof of Lemma \ref{lem:height}, but simpler.

\begin{lemma} \label{lem:cutpoint}
    Let \eqref{eq:normalform} be an equation in normal form.
    If it has at least three nonempty solutions,
    and if there exists $k \in \{1, \dots, n - 1\}$ such that
    \begin{equation*}
        \wordsum(u_0) = \dots = \wordsum(u_{k - 1}) = 0 \ne \wordsum(u_k),
    \end{equation*}
    then every nonempty solution is of length more than
    \begin{math}
        |v_0 \dotsm v_{k - 1}| - |u_0 \dotsm u_k|.
    \end{math}
\end{lemma}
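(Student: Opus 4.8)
The plan is to mimic the structure of the proof of Lemma~\ref{lem:height}, but using $\psw$-prefixes that record where a zero-sum block ends, rather than where the height is attained. The key observation is that since $\wordsum(u_0)=\dots=\wordsum(u_{k-1})=0$, the word $u_0 X u_1 \dotsm X u_{k-1} X$ is zero-sum whenever $[x]$ is a zero-sum solution (all solutions are zero-sum by Lemma~\ref{lem:zerosum}), so after that point the curve of the left-hand side "restarts" at height $0$; then the constant block $u_k$ with $\wordsum(u_k)\ne 0$ forces the curve to leave $0$. I would look at the first position where the curve of the left-hand side reaches its value at the end of $u_0 X u_1 \dotsm X u_{k-1} X u_k$ having some specific behaviour that pins down a prefix length.

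First I would set up the analogue of the function $\phi$: for each solution $[x]$, define an appropriate "marker" prefix of $u_0 x u_1 \dotsm x u_n$ that is forced, by the equation $u_0 x u_1 \dotsm x u_n = v_0 x v_1 \dotsm x v_n$, to equal the corresponding marker prefix of the right-hand side. Concretely, since all $u_0,\dots,u_{k-1}$ are zero-sum and $u_k$ is not, I would take the first occurrence on the left-hand side of a particular value of $\psw_0$ just past the block $u_0 x u_1 \dotsm x u_{k-1} x$; because of the sign of $\wordsum(u_k)$, this marker is located a \emph{fixed distance} into $u_k$, hence its position is $|u_0 \dotsm u_{k-1}| + k|x| + c$ for a constant $c$ independent of $x$. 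On the right-hand side, by Condition~N\ref{item:leq} the corresponding matching position lies strictly before the $k$-th occurrence of $X$ (this is where the hypothesis $k\le n-1$ and the normal-form inequalities come in), so it has the form $|v_0 \dotsm v_{j-1}| + j|x| + c'$ with $j \le k-1$, or possibly inside an earlier constant block giving an even smaller coefficient of $|x|$. Equating the two positions yields a linear equation in $|x|$ with a nonzero coefficient (since the two coefficients of $|x|$ differ, as $j < k$), so it has at most one solution in $|x|$ — unless the marker on the right also falls strictly after the $(k-1)$-th $X$, which I must rule out. With three nonempty solutions of (by Lemma~\ref{lem:height}) the \emph{same height}, pigeonhole then forces the marker positions to behave uniformly, and from the position identity one extracts that every nonempty solution has length strictly exceeding $d = |v_0 \dotsm v_{k-1}| - |u_0 \dotsm u_k|$; indeed $d$ is exactly the "slack" that must be absorbed before the marker on the right can catch up.

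The main obstacle I anticipate is the careful case analysis of \emph{where} the matching marker sits on the right-hand side — inside which constant block or which copy of $X$ — exactly as the four cases (i)--(iv) in the proof of Lemma~\ref{lem:height} were the technical heart there. In particular I expect to need the hypothesis of \emph{three} nonempty solutions (not merely two) together with Lemma~\ref{lem:height}'s conclusion that all three have equal height, in order to exclude the degenerate case where the coefficient of $|x|$ vanishes on both sides and the position identity becomes vacuous. The bound "length more than $d$" rather than "length at least $d$" should come out of the strict inequality in Condition~N\ref{item:less} (applied as in Case~(iii) of Lemma~\ref{lem:height}, forcing $k > l$ for the relevant maximal-prefix indices), which is what makes the cut in Lemma~\ref{lem:cut} applicable afterwards. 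I would double-check that the constant $c$ (how far into $u_k$ the marker lies) is genuinely independent of $x$ and strictly positive, since that is what guarantees the marker is inside $u_k$ and not at its very left edge.
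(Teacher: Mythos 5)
Your plan (a $\psi$-type marker prefix, a common height $h$ from Lemma~\ref{lem:height}, and a position-matching argument over more than one $|x|$) is indeed the skeleton the paper uses, but the geometry you fill in is essentially backwards, and the geometry is what drives the contradiction. The paper takes $\psi$ to be the shortest prefix reaching level $\wordsum(u_k)+h$; the argument then shows, via Condition~N\ref{item:leq}, that $u$ \emph{cannot} be a prefix of $u_i$, so the marker lies strictly \emph{past} $u_k$, inside the next copy of $x$ (that is, $u=u_k u'$ with $u'$ a nonempty prefix of $x$), not ``a fixed distance into $u_k$'' as you assert. More seriously, you claim N\ref{item:leq} forces the right-hand marker strictly before the $k$-th occurrence of $X$, i.e.\ $j\le k-1$, giving a nonzero coefficient of $|x|$ and hence at most one $|x|$, and you note you would still have to ``rule out'' $j\ge k$. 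But $j\ge k$ cannot be ruled out: it is exactly what happens. The paper shows $i\le k$ from $\height{u_0xu_1\dotsm u_kx}\ge\wordsum(u_k)+h$, then deduces $i=j$ because $|\psi(\cdot)|$ agrees on both sides for more than one $|x|$, then excludes $i<k$ (for $i<k$, $u_i$ is zero-sum, so appending $x$ would not push the height up to $\wordsum(u_k)+h$), concluding $i=j=k$. With the indices equal the coefficients of $|x|$ cancel, so the ``nonzero coefficient $\Rightarrow$ at most one solution'' mechanism you sketch never fires.

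The actual bound is then obtained by a plain length comparison at index $k$: since $u$ is a prefix of $u_kx$, we have $|u_0\dotsm u_{k-1}u|\le|u_0\dotsm u_k|+|x|$, while $v$ is nonempty so $|v_0\dotsm v_{k-1}v|>|v_0\dotsm v_{k-1}|$; if $|x|\le|v_0\dotsm v_{k-1}|-|u_0\dotsm u_k|$, these two quantities would have to be equal yet satisfy a strict inequality, a contradiction. Note that the strictness comes from $v\ne\eps$, not from N\ref{item:less} as you predicted, and that N\ref{item:leq} is used to push $u$ past $u_k$ rather than to bound $j$. Your ``pigeonhole'' step is too vague to tell whether it could be repaired, but as stated the proposal's central contradiction mechanism does not apply to this lemma, so this is a genuine gap rather than a routine detail to fill in.
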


\begin{proof}
    By symmetry, we can assume that $\wordsum(u_k) > 0$.
    By Lemma \ref{lem:height}, the nonempty solutions have a common height $h$.
    For any word $w$ of height at least $\wordsum(u_k) + h$,
    let $\psi(w)$ be its shortest prefix such that
    \begin{math}
        \height{\psi(w)} \geq \wordsum(u_k) + h.
    \end{math}
    If $[x]$ is a nonempty solution,
    then there exist indices $i, j$ and words $u, v$ such that
    $u$ is a nonempty prefix of $u_i x$,
    $v$ is a nonempty prefix of $v_j x$ and
    \begin{equation*}
        \psi(u_0 x u_1 \dotsm x u_n) = u_0 x u_1 \dotsm u_{i - 1} x u, \
        \psi(v_0 x v_1 \dotsm x v_n) = v_0 x v_1 \dotsm v_{j - 1} x v.
    \end{equation*}
    Here $i, j, u, v$ are the same for all $x$,
    because every $x$ has sum zero and height $h$,
    and the shortest $x$ is a prefix of every other $x$.
    Clearly $i \leq k$, because
    \begin{equation*}
        \height{u_0 x u_1 \dotsm u_k x}
        \geq \wordsum(u_0 x u_1 \dotsm x u_k) + h = \wordsum(u_k) + h.
    \end{equation*}
    We know that
    \begin{math}
        \psi(u_0 x u_1 \dotsm x u_n) = \psi(v_0 x v_1 \dotsm x v_n)
    \end{math}
    (actually, we only need the fact that these words have the same length).
    Because
    \begin{equation*}
        |u_0 x u_1 \dotsm u_{i - 1} x u|
        = |v_0 x v_1 \dotsm v_{j - 1} x v|
    \end{equation*}
    for more than one $|x|$, it must be $i = j$,
    and then $|u_0 \dotsm u_{i - 1} u| = |v_0 \dotsm v_{i - 1} v|$.
    Because $|u_0 \dotsm u_i| \leq |v_0 \dotsm v_{i - 1}|$
    by Condition~N\ref{item:leq} in the definition of normal form,
    $u$ cannot be a prefix of $u_i$.
    This means that $\height{u_0 x u_1 \dotsm x u_i} < \wordsum(u_k) + h$.
    If $i < k$, then $u_i$ is zero-sum
    and thus adding $x$ after $x u_i$ does not increase the height,
    so also $\height{u_0 x u_1 \dotsm u_i x} < \wordsum(u_k) + h$,
    which is a contradiction.
    Therefore $i = k$.
    If there exists a nonempty solution $[x]$ of length at most
    \begin{math}
        |v_0 \dotsm v_{k - 1}| - |u_0 \dotsm u_k|,
    \end{math}
    then
    \begin{equation*}
        |u_0 \dotsm u_{k - 1} u| \leq |u_0 \dotsm u_k x|
        \leq |v_0 \dotsm v_{k - 1}| < |v_0 \dotsm v_{k - 1} v|,
    \end{equation*}
    a contradiction.
\end{proof}

\begin{lemma} \label{lem:cutpointper}
    Let the equation \eqref{eq:normalform} have the solution set $[p^*]$
    for some primitive word $p$.
    Let $u_0 = v_n = \eps$.
    Let $j \in \{0, \dots, n\}$ be the largest index such that
    the lengths of $u_0, \dots, u_{j - 1}$ and $v_0, \dots, v_{j - 1}$
    are divisible by $|p|$.
    Then $j > 0$ and
    \begin{math}
        |v_0 \dotsm v_{j - 1}| - |u_0 \dotsm u_j| \leq |p|.
    \end{math}
\end{lemma}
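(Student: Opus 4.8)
The plan is to substitute a large power $x = p^m$ into the equation and study the word
\[
    w_m = u_0 p^m u_1 \dotsm p^m u_n = v_0 p^m v_1 \dotsm p^m v_n .
\]
Inside $w_m$, the factor $p^m$ contributed by the $i$-th occurrence of $X$ on the left-hand side begins at position $|u_0 \dotsm u_{i-1}| + (i-1)m|p|$, and the factor $p^m$ contributed by the $i$-th occurrence of $X$ on the right-hand side begins at position $|v_0 \dotsm v_{i-1}| + (i-1)m|p|$. The elementary fact I will use repeatedly is that whenever two occurrences of $p^m$ inside a word overlap on a window of length at least $|p|$, their starting positions must be congruent modulo $|p|$: otherwise, reading that window off each of the two occurrences would exhibit $p$ as equal to one of its nontrivial cyclic rotations, which is impossible since $p$ is primitive. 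Because $[p^m]$ is a solution for every $m$, I am free to take $m$ as large as needed so that all the overlap windows appearing below have length at least $|p|$.

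First I would apply this to the $i$-th occurrences of $X$ on the two sides, for each $i \in \{1, \dots, n\}$: for large $m$ the two corresponding copies of $p^m$ overlap on a window whose length differs from $m|p|$ by at most a fixed constant, hence is at least $|p|$, so $|v_0 \dotsm v_{i-1}| \equiv |u_0 \dotsm u_{i-1}| \pmod{|p|}$. For $i = 1$ this reads $|v_0| \equiv 0 \pmod{|p|}$ (since $u_0 = \eps$), which together with $|u_0| = 0$ shows $j \geq 1$. Subtracting consecutive instances of this congruence also gives $|u_i| \equiv |v_i| \pmod{|p|}$ for $i \in \{1, \dots, n-1\}$.

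If $j = n$, then $|v_0 \dotsm v_{j-1}| - |u_0 \dotsm u_j| = |v_0 \dotsm v_{n-1}| - |u_0 \dotsm u_n| = 0$ by $v_n = \eps$ and $|u_0 \dotsm u_n| = |v_0 \dotsm v_n|$, and we are done. Otherwise $j < n$, so $j \in \{1, \dots, n-1\}$; by maximality of $j$, at least one of $|u_j|, |v_j|$ is not divisible by $|p|$, and since $|u_j| \equiv |v_j| \pmod{|p|}$ in fact $|u_j| \not\equiv 0 \pmod{|p|}$. Suppose for contradiction that $d := |v_0 \dotsm v_{j-1}| - |u_0 \dotsm u_j| \geq |p|$. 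In $w_m$, for large $m$, the copy of $p^m$ coming from the $j$-th occurrence of $X$ on the right-hand side and the copy coming from the $(j+1)$-th occurrence of $X$ on the left-hand side overlap on a window of length exactly $d \geq |p|$; hence their starting positions are congruent modulo $|p|$, i.e.\ $d \equiv 0 \pmod{|p|}$. But the congruences of the previous paragraph give $d \equiv -|u_j| \pmod{|p|}$, so $|u_j| \equiv 0 \pmod{|p|}$, a contradiction. Therefore $d < |p|$, which proves the bound.

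The only step that takes genuine care is making the overlap argument rigorous: computing the exact positions of the two copies of $p^m$ inside $w_m$, checking that for all sufficiently large $m$ the relevant overlap window really has length at least $|p|$ (so that an entire period of $p$ is visible in it), and confirming that being ``out of phase by a nonzero amount modulo $|p|$'' genuinely contradicts the primitivity of $p$ through the characterisation of primitive words by their cyclic rotations. The remaining parts --- reducing to the congruences modulo $|p|$, the case split on whether $j = n$, and the arithmetic with the partial-length sums --- are routine.
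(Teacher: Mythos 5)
Your proof is correct and follows essentially the same route as the paper: both substitute the solutions $p^m$ for sufficiently large $m$ and use the primitivity of $p$ (no nontrivial cyclic rotation of $p$ equals $p$) to force the relevant length differences to be congruent to $0$ modulo $|p|$, first yielding $j > 0$ and $|u_j| \equiv |v_j| \pmod{|p|}$, then ruling out $|v_0 \dotsm v_{j-1}| - |u_0 \dotsm u_j| \geq |p|$ by a final congruence contradiction. The only difference is presentational: you phrase the primitivity step as a synchronization property of overlapping occurrences of $p^m$, while the paper phrases the same fact via length-$|p|$ suffixes of prefixes of $p^m$.
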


\begin{proof}
    If $j = n$, the claim is clear.
    Otherwise, at least one of $|u_j|, |v_j|$ is not divisible by $|p|$.
    Let $m$ be such that
    \begin{math}
         |p^{m - 1}| \geq |v_0 \dotsm v_j| - |u_0 \dotsm u_j|.
    \end{math}
    Let
    \begin{math}
        d = |v_0 \dotsm v_{j - 1}| - |u_0 \dotsm u_j|.
    \end{math}

    Let $r$ be the prefix of $p^m$ of length
    \begin{math}
         |p^m| - |v_0 \dotsm v_j| + |u_0 \dotsm u_j| \geq |p|,
    \end{math}
    and let $p'$ be the suffix of $r$ of length $|p|$.
    Because $p$ is primitive,
    $p' = p$ if and only if $|r|$ is divisible by $|p|$.
    We have
    \begin{math}
        u_0 p^m u_1 \dotsm u_j p^m = v_0 p^m v_1 \dotsm p^m v_j r,
    \end{math}
    and it follows that $p = p'$, so $|r|$ is divisible by $|p|$.
    This means that $|u_j|$ and $|v_j|$ are congruent modulo $|p|$,
    so neither of them is divisible by $|p|$.
    Consequently, $j \ne 0$ and $d$ is not divisible by $|p|$.

    Let $s$ be the prefix of $p^m$ of length $d$.
    If $d > |p|$, we can let $p''$ be the suffix of $s$ of length $|p|$.
    Because $p$ is primitive,
    $p'' = p$ if and only if $|s|$ is divisible by $|p|$.
    We have
    \begin{math}
        u_0 p^m u_1 \dotsm p^m u_j s = v_0 p^m v_1 \dotsm v_{j - 1} p^m,
    \end{math}
    and it follows that $p = p''$, so $|s| = d$ is divisible by $|p|$.
    This is a contradiction, so $d \leq |p|$.
\end{proof}


Lemma \ref{lem:cut} does not guarantee
that the new, shorter equation would have the empty solution.
Sometimes the next lemma can be used to get around this problem.

\begin{lemma} \label{lem:empty}
    If the equation \eqref{eq:normalform} has a nonempty solution,
    $u_n = u a^m$ for some $u \in \Gamma^*$, $a \in \Gamma$ and $m \geq 0$,
    and $u_0 \dotsm u_{n - 1} u$ is a prefix of $v_0 \dotsm v_n$,
    then the equation has the empty solution.
\end{lemma}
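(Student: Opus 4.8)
The plan is to induct on the total length of the equation. Write $E$ for the equation~\eqref{eq:normalform} and set $L=|u_0\dotsm u_n|=|v_0\dotsm v_n|$. If $m=0$ then $u_0\dotsm u_n=u_0\dotsm u_{n-1}u$, which by hypothesis is a prefix of $v_0\dotsm v_n$; as both words have length $L$ they are equal, so $[\eps]$ is a solution. Assume therefore $m\ge1$ and fix a nonempty solution $[x]$, so that $W:=u_0xu_1\dotsm xu_n=v_0xv_1\dotsm xv_n$. Since $u_n=ua^m$, the suffix of $W$ of length $m$ equals $a^m$; write $W=Qa^m$ where $Q$ is the prefix of $W$ of length $|W|-m$ (thus $Q=u_0xu_1\dotsm xu_{n-1}xu$). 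The key is a dichotomy on $|v_n|$. First, if $|v_n|\ge m$, then the final factor $v_n$ of the right-hand side fills the last $|v_n|\ge m$ letters of $W$, so its suffix of length $m$ is $a^m$ and hence $v_0\dotsm v_n$ ends in $a^m$; combined with $u_0\dotsm u_{n-1}u\prefrel v_0\dotsm v_n$ (a prefix of length $L-m$) and with $|v_0\dotsm v_n|=L$, this forces $v_0\dotsm v_n=u_0\dotsm u_{n-1}ua^m=u_0\dotsm u_n$, so $[\eps]$ is a solution.

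Next, if $|v_n|<m$, then $v_n$ is a suffix of the block $a^m$, so $v_n=a^{|v_n|}$. Cancelling $a^{|v_n|}$ from the right-hand ends of $W=Qa^m$ and of $v_0xv_1\dotsm xv_{n-1}xv_n$ shows that $[x]$ is a nonempty solution of
\[
    E_1:=(u_0Xu_1\dotsm Xu_{n-1}X\,u a^{m-|v_n|},\ v_0Xv_1\dotsm Xv_{n-1}X).
\]
This $E_1$ again has the shape~\eqref{eq:normalform} with the same $n$; its last left constant part is $u\cdot a^{m'}$ with $m'=m-|v_n|\ge1$; and the matching prefix condition holds, because $u_0\dotsm u_{n-1}u$ is a prefix of $v_0\dotsm v_n$ of length $L-m\le L-|v_n|=|v_0\dotsm v_{n-1}|$, hence a prefix of $v_0\dotsm v_{n-1}$. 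If moreover $|v_n|\ge1$, then $E_1$ is strictly shorter than $E$, so by the induction hypothesis $E_1$ has the empty solution, i.e. $u_0\dotsm u_{n-1}u a^{m-|v_n|}=v_0\dotsm v_{n-1}$; multiplying by $v_n=a^{|v_n|}$ on the right gives $v_0\dotsm v_n=u_0\dotsm u_n$ and we are done.

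The only case not yet covered is $v_n=\eps$ with $m\ge1$. Here $u_0$ and $v_0$ are both prefixes of $W$, so one is a prefix of the other; if the shorter one is nonempty, cancelling it from the front of $E$ yields a strictly shorter equation of the shape~\eqref{eq:normalform}, still with a nonempty solution, the same last left part and the prefix condition intact, and I would finish by induction. If $u_0=v_0=\eps$ I would instead cancel the leading occurrence of $X$, lowering the number of occurrences of $X$ (and when $n=1$ this forces $u_1=\eps$, contradicting $m\ge1$, so that subcase is vacuous). What remains is $u_0=\eps\ne v_0$ (or its mirror image). In that situation $W=xu_1\dotsm xu_{n-1}\,x\,ua^m=v_0xv_1\dotsm xv_{n-1}\,x$, and comparing the two suffixes $x$ and $x\,ua^m$ of $W$ (the images of the last variable on the two sides together with the trailing constants) gives a conjugacy relation $x\,ua^m=\zeta x$ with $|\zeta|=|u_n|$; the plan is to use the prefix hypothesis $u_0\dotsm u_{n-1}u\prefrel v_0\dotsm v_{n-1}$ to identify $\zeta$ and obtain one more reduction to a shorter or lower-occurrence instance.

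The hard part will be exactly this last case. One must show that the conjugate $\zeta$ of $ua^m$ forced by the solution is compatible with the prefix hypothesis, so that closing the induction does not produce a genuinely new configuration; concretely, the equation $x\,ua^m=\zeta x$ has to be combined with the fact that $u$ — and hence a prefix of $ua^m$ of length $|u_n|-m$ — reappears as a prefix of $v_0\dotsm v_{n-1}$, which should force $\zeta$ to end in $a^m$, i.e. $\zeta=ua^m$, after which $[\eps]$ being a solution follows as in the main cases. Everywhere else the argument is elementary bookkeeping about which factors of $W$ cover its $a^m$-suffix, and I expect no difficulty there.
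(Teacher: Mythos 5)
Your induction handles the peripheral cases correctly (the case $m=0$; the case $|v_n|\ge m$; the cancellation of $a^{|v_n|}$ when $0<|v_n|<m$; the front cancellations when one of $u_0,v_0$ is a nonempty prefix of the other or both are empty), but the proof is not complete: the case $v_n=\eps$ with exactly one of $u_0,v_0$ empty is only sketched, and by your own admission it is where the real content lies. This is not a marginal subcase. In the paper the lemma is applied to equations derived from normal form, where $u_0=v_n=\eps$ and $v_0\ne\eps$, so your induction defers essentially the whole statement to the one configuration you do not prove. Moreover the plan you outline there is doubtful: from $x\,ua^m=\zeta x$ you only get that $\zeta$ and $ua^m$ are conjugate via some factorization $\zeta=\alpha\beta$, $ua^m=\beta\alpha$, $x\in(\alpha\beta)^*\alpha$, and the prefix hypothesis $u_1\dotsm u_{n-1}u\prefrel v_0\dotsm v_{n-1}$ concerns only the constants, whereas $\zeta$ is a suffix of $v_0xv_1\dotsm xv_{n-1}$ and depends on $x$; no argument is given that this forces $\zeta$ to end in $a^m$, and even granting $\zeta=ua^m$ (so that $x$ and $ua^m$ are powers of a common word) it is not shown how $[\eps]$ being a solution ``follows as in the main cases.''

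The paper's proof avoids all of this with a two-line abelian (letter-counting) argument, which you could adopt to close your gap in every case at once: write $v_0\dotsm v_n=u_0\dotsm u_{n-1}u\,y$ using the prefix hypothesis. Since the equation has a solution and both sides contain exactly $n$ occurrences of $X$, the words $u_0\dotsm u_n=u_0\dotsm u_{n-1}u\,a^m$ and $v_0\dotsm v_n$ have the same number of occurrences of each letter; cancelling the common prefix $u_0\dotsm u_{n-1}u$ shows that $y$ has the same letter counts as $a^m$, and a word abelian equivalent to a power of a single letter equals that power, so $y=a^m$. Hence $u_0\dotsm u_n=v_0\dotsm v_n$ and $[\eps]$ is a solution. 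No induction, case split, or conjugacy analysis is needed; in particular your unresolved case is exactly the one this counting argument settles immediately.
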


\begin{proof}
    Let $y$ be a word such that
    \begin{math}
        u_0 \dotsm u_{n - 1} u y = v_0 \dotsm v_n.
    \end{math}
    We say that words $p, q$ are \emph{abelian equivalent}
    if $|p|_b = |q|_b$ for all letters $b$.
    Because \eqref{eq:normalform} has a solution,
    $u_0 \dotsm u_{n - 1} u a^m$ and $v_0 \dotsm v_n$ are abelian equivalent.
    Thus $u_0 \dotsm u_{n - 1} u y$ and $u_0 \dotsm u_{n - 1} u a^m$
    are abelian equivalent,
    so $y$ and $a^m$ are abelian equivalent and $y = a^m$.
    The claim follows.
\end{proof}

\section{Main results}

Now we are ready to prove our main results.

\begin{theorem} \label{thm:main}
    If a one-variable equation has only finitely many solutions,
    it has at most three solutions.
\end{theorem}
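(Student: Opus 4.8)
The plan is to reduce the general situation to the normal form set up in Sections~4--6 and then argue that the normal-form data forces at most three solutions. By Lemmas~\ref{lem:assumeempty} and~\ref{lem:normalform}, it suffices to prove that a nontrivial equation \eqref{eq:normalform} in normal form, with only finitely many solutions, has at most two \emph{nonempty} solutions. So suppose for contradiction that such an equation has at least three nonempty solution words $x_1, x_2, x_3$ with $|x_1| < |x_2| < |x_3|$ (they have distinct lengths because the shorter is always a prefix and suffix of the longer). By Lemma~\ref{lem:zerosum} they are all zero-sum, by Lemma~\ref{lem:st} the sum-sequences $(s_i)$ and $(t_i)$ are permutations of each other, and by Lemma~\ref{lem:height} they all have the same height $h$ given by \eqref{eq:height}.

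Next I would extract a ``periodicity'' contradiction. The three solution words share a common prefix and a common suffix of length $|x_1|$, and more is true: the combinatorics of Lemma~\ref{lem:height}, case~(iv), tells us that the first occurrence of the maximal height on each side of the equation, once the solution is long enough, sits inside a variable block with matching indices $k > l$; this rigidity, together with the fact that $x_1$ is both a prefix and a suffix of $x_2$ and of $x_3$, should force each $x_i$ to be of the form $(pq)^{a_i} p$ for fixed words $p,q$ with $pq$ primitive (this is exactly the form of infinite solution sets in Theorem~\ref{thm:one_sol_log}, and the point is to produce it from just three solutions). Once we are inside a single progression $[(pq)^+ p]$, Lemma~\ref{lem:one_pq} applies: the intersection $\sol(E) \cap [(pq)^+ p]$ is either all of $[(pq)^+ p]$ — contradicting finiteness — or has at most one element — contradicting that it contains (at least) $x_2$ and $x_3$. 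Either way we get a contradiction, so there are at most two nonempty solutions.

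The step I expect to be the real obstacle is producing the common period $pq$ from the existence of three solutions: Lemma~\ref{lem:one_pq} is the easy endgame, but getting into its hypothesis is the heart of the matter. The tools at hand are the compression Lemma~\ref{lem:code} (applied with $Z$ the code of minimal zero-sum words, or the code of words of a fixed length), the cutting Lemma~\ref{lem:cut} together with the index-finding Lemmas~\ref{lem:cutpoint} and~\ref{lem:cutpointper}, and the empty-solution Lemma~\ref{lem:empty}. I would try to induct on the length of the equation: use Lemma~\ref{lem:cutpoint} to locate an index $k$ with $\wordsum(u_0)=\dots=\wordsum(u_{k-1})=0\ne\wordsum(u_k)$ (or handle the degenerate case where all $u_i$ are zero-sum by the compression lemma), apply Lemma~\ref{lem:cut} to pass to a strictly shorter equation \eqref{eq:cut} retaining all three nonempty solutions, restore the empty solution via Lemma~\ref{lem:empty} and re-normalize via Lemma~\ref{lem:normalform}, and thereby reach a contradiction with minimality — unless the equation already has an infinite (periodic) solution set, in which case Lemma~\ref{lem:cutpointper} and Lemma~\ref{lem:one_pq} finish directly. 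Carefully checking that the hypotheses of Lemmas~\ref{lem:cut} and~\ref{lem:empty} are met at each stage (in particular the ``common prefix of length $d$'' and the suffix-power condition $u_n = ua^m$) is where the bookkeeping will be delicate, but the height and sum invariants from Section~6 are exactly what make those hypotheses hold.
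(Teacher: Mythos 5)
Your third paragraph captures the skeleton of the paper's argument --- minimal counterexample, normal form, cut shorter via Lemmas~\ref{lem:cutpoint} and~\ref{lem:cut}, contradict minimality --- but two essential technical steps are missing, and they are the real content. First, ``restore the empty solution via Lemma~\ref{lem:empty}'' does not work directly: after cutting you have $E_2 : (u_0 X u_1 \dotsm X u_k y,\ v_0 X v_1 \dotsm v_{k-1} X)$, and Lemma~\ref{lem:empty} needs the trailing piece $y$ to be a power $a^m$ of a single letter, which is false in general. The paper's key device is to push $E_2$ through the length-preserving map $f = g \circ \psw$, where $g$ sends $\min(\psw(x_1))$ to $b$ and every other value to $a$. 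Because $\wordsum(u_k) > 0$ and $y$ is a prefix of the shortest nonempty solution word $x_1$, the image $g(\psw_{\wordsum(u_k)}(y))$ lies in $a^*$, so the transformed equation $E_3$ does satisfy Lemma~\ref{lem:empty}'s hypothesis. Without some such substitution your step has no justification (and ``re-normalize via Lemma~\ref{lem:normalform}'' already presupposes the empty solution you are trying to produce).

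Second, $E_3$ may still have infinitely many solutions, of the form $[p^*]$, and you assert that Lemma~\ref{lem:cutpointper} together with Lemma~\ref{lem:one_pq} ``finish directly'' --- they do not. The paper runs a second round: apply Lemma~\ref{lem:cutpointper} to $E_3$ to find an index $j$ at which the block lengths are divisible by $|p|$, cut $E_1$ at $j$ to get $E_4$, compress $E_4$ with Lemma~\ref{lem:code} using $Z = \Gamma^{|p|}$ so the leftover $z$ (which satisfies $|z| \leq |p|$) collapses to a single letter, and only then apply Lemma~\ref{lem:empty} with $m = 1$ to reach the contradicting shorter equation $E_5$. Your second paragraph's alternative --- conjure a common period $pq$ from three nested solutions and case (iv) of Lemma~\ref{lem:height}, then invoke Lemma~\ref{lem:one_pq} --- is not the paper's route and is unsubstantiated as written (you flag it yourself as the obstacle); in the actual proof Lemma~\ref{lem:one_pq} plays only the minor role of showing the cut equations cannot acquire infinitely many solutions.
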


\begin{proof}
    Assume that there is a counterexample.
    Then there is one with an empty solution by Lemma~\ref{lem:assumeempty}.
    Of all equations with the empty solution,
    at least three nonempty solutions,
    and only finitely many solutions,
    let $E_1$ be a shortest one.
    We are going to prove a contradiction by showing that
    there exists a shorter equation with these properties.
    By Lemma~\ref{lem:normalform},
    we can assume that $E_1$ is the equation \eqref{eq:normalform}
    and it is in normal form.
    By Lemma~\ref{lem:zerosum}, each one of its solutions is zero-sum.

    The idea of the proof is to cut off part of the equation
    to get a shorter equation $E_2$
    that has at least three nonempty solutions but only finitely many.
    Unfortunately, $E_2$ does not necessarily have the empty solution.
    We map $E_2$ with a length-preserving mapping to get an equation $E_3$
    that has at least three nonempty solution and also the empty solution.
    Unfortunately, $E_3$ might have infinitely many solutions.
    We analyze $E_3$ to find another way to cut off part of $E_1$
    to get an equation $E_4$, which is then modified to an equation $E_5$.
    For $E_5$, we can finally prove that it has the empty solution
    and at least three but only finitely many nonempty solutions.

    If $\wordsum(u_i) = 0$ for all $i < n$,
    then $\wordsum(v_i) = 0$ for all $i < n$ by Lemma~\ref{lem:st},
    and then also $\wordsum(u_n) = 0$,
    because $\wordsum(u_0 \dotsm u_n) = \wordsum(v_0 \dotsm v_n)$
    and $v_n = \eps$.
    Thus all $u_i, v_i$ are zero-sum,
    and we can use Lemma~\ref{lem:code}
    with $Z$ the set of all minimal zero-sum words
    to get a shorter equation
    with the same number of solutions, one of them empty.

    For the rest of the proof, we assume that there exists a minimal $k < n$
    such that $\wordsum(u_k) \ne 0$.
    By symmetry, we can assume that $\wordsum(u_k) > 0$.
    By Lemmas~\ref{lem:cutpoint} and \ref{lem:cut},
    we get a shorter equation
    \begin{equation*}
        E_2: (u_0 X u_1 \dotsm X u_k y, v_0 X v_1 \dotsm v_{k - 1} X)
    \end{equation*}
    that has at least all the same nonempty solutions as $E_1$.
    It might have some other solutions as well,
    but it cannot have infinitely many solutions,
    because the intersection of
    an infinite solution set of a nontrivial one-variable equation
    and a finite solution set of a one-variable equation
    is of size at most two
    by Theorem~\ref{thm:one_sol_log} and Lemma~\ref{lem:one_pq}.
    If it has also the empty solution, then we are done,
    but we do not know yet whether this is the case.
    We can use Lemma~\ref{lem:st} for $E_2$ to see that
    \begin{math}
        (\wordsum(u_0), \dots, \wordsum(u_0 \dotsm u_{k - 1}))
    \end{math}
    and
    \begin{math}
        (\wordsum(v_0), \dots, \wordsum(v_0 \dotsm v_{k - 1}))
    \end{math}
    are permutations of each other.
    We know that $u_0, \dots, u_{k - 1}$ are zero-sum,
    so also $v_0, \dots, v_{k - 1}$ are zero-sum.

    Let $[x_1]$ be the shortest nonempty solution of $E_1$.
    Let $\{a, b\}$ be an alphabet
    and let $g$ be the morphism that maps
    the letter $\min(\psw(x_1))$ to $b$ and every other letter to $a$.
    Let $f = g \circ \psw$.
    Then $f$ is length-preserving,
    and if $w$ is zero-sum, then $f(w w') = f(w) f(w')$.
    If $[x]$ is a nonempty solution of $E_1$,
    then $[f(x)]$ is a solution of the equation
    \begin{equation*}
        E_3: (f(u_0) X f(u_1) \dotsm X f(u_k y),
        f(v_0) X f(v_1) \dotsm f(v_{k - 1}) X).
    \end{equation*}
    We have
    \begin{math}
        f(u_k y) = f(u_k) g(\psw_{\wordsum(u_k)}(y)).
    \end{math}
    Because $\wordsum(u_k) > 0$ and $y$ is a prefix of $x_1$,
    \begin{math}
        \min(\psw_{\wordsum(u_k)}(y)) > \min(\psw(x_1)).
    \end{math}
    Thus
    \begin{math}
        g(\psw_{\wordsum(u_k)}(y)) \in a^*.
    \end{math}
    Because $u_0 \dotsm u_k$ is a prefix of $v_0 \dotsm v_{k - 1}$,
    also $f(u_0 \dotsm u_k) = f(u_0) \dotsm f(u_k)$
    is a prefix of $f(v_0 \dotsm v_{k - 1}) = f(v_0) \dotsm f(v_{k - 1})$.
    We can use Lemma~\ref{lem:empty}
    with $g(\psw_{\wordsum(u_k)}(y))$ as $a^m$,
    so $E_3$ has the empty solution.
    If it has only finitely many solutions, then we are done.
    For the rest of the proof, we assume that it has infinitely many solutions.
    Then its solution set is $[p^*]$ for some primitive word $p$.
    Consequently,
    the length of every solution word of $E_1$ is divisible by $|p|$.
    Because the solution word $f(x_1)$ of $E_3$ contains the letter $b$,
    also $p$ must contain $b$.
    This means that $p$ cannot be a suffix of
    \begin{math}
        g(\psw_{\wordsum(u_k)}(y)) \in a^*,
    \end{math}
    so $|p| > |y|$.

    We can use Lemma~\ref{lem:cutpointper} for $E_3$ to find an index $j$
    such that the lengths of
    $u_0, \dots, u_{j - 1}$ and $v_0, \dots, v_{j - 1}$
    are divisible by $|p|$ and, if $j < k$,
    \begin{math}
        |v_0 \dotsm v_{j - 1}| - |u_0 \dotsm u_j| \leq |p|
    \end{math}
    (remember that $f$ is length-preserving).
    By letting $z = y$ if $j = k$,
    or by using Lemma~\ref{lem:cut} with $j$ as $k$ for $E_1$ otherwise,
    we get an equation
    \begin{equation*}
        E_4: (u_0 X u_1 \dotsm X u_j z, v_0 X v_1 \dotsm v_{j - 1} X)
    \end{equation*}
    that has at least all the same nonempty solutions as $E_1$.
    In both cases, $|z| \leq |p|$.
    Like in the case of $E_2$,
    we see that $E_4$ cannot have infinitely many solutions.
    The lengths of all the constant words in $E_4$ are divisible by $|p|$,
    and so are the lengths of at least three nonempty solutions
    (the solutions of~$E_1$).
    We can use Lemma \ref{lem:code} with $Z = \Gamma^{|p|}$ for $E_4$.
    If $h$ is the morphism of Lemma \ref{lem:code},
    then we get the equation
    \begin{equation*}
        E_5: (h(u_0) X h(u_1) \dotsm X h(u_j z),
        h(v_0) X h(v_1) \dotsm h(v_{j - 1}) X).
    \end{equation*}
    It has at least three nonempty solutions, but only finitely many.
    Because $|z| \leq |p|$, $h(u_j z) = h(u) c$,
    where $u$ is a prefix of $u_j$ and $c$ is a letter.
    Because $u_0 \dotsm u_j$ is a prefix of $v_0 \dotsm v_{j - 1}$,
    also $h(u_0 \dotsm u_{j - 1} u) = h(u_0) \dotsm h(u_{j - 1}) h(u)$
    is a prefix of $h(v_0 \dotsm v_{k - 1}) = h(v_0) \dotsm h(v_{k - 1})$.
    We can use Lemma~\ref{lem:empty} with $c$ as $a$ and $m = 1$,
    so $E_5$ has the empty solution.
    This contradicts the minimality of $E_1$.
\end{proof}


\begin{theorem}
    If a system of constant-free three-variable equations
    is independent and has a nonperiodic solution,
    then it has at most 17 equations.
\end{theorem}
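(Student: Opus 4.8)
The plan is to combine the two main results already established in the excerpt. The first ingredient is Theorem~\ref{thm:main}, which we have just proved: every one-variable word equation with constants has either infinitely many solutions or at most three. The second ingredient is Theorem~\ref{thm:one_three}, the transfer result from~\cite{NoSa18ijfcs}, which states precisely that \emph{if} every one-variable word equation has either infinitely many solutions or at most three, \emph{then} Conjecture~\ref{con:threevar} holds with $c = 17$, i.e.\ every independent system of constant-free three-variable equations with a nonperiodic solution has at most $17$ equations.

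The proof is then immediate: the hypothesis of Theorem~\ref{thm:one_three} is exactly the statement of Theorem~\ref{thm:main}, so we may apply Theorem~\ref{thm:one_three} and conclude that Conjecture~\ref{con:threevar} is true for $c = 17$. Since an independent system of constant-free three-variable equations having a nonperiodic solution is of size at most $c = 17$ by definition of Conjecture~\ref{con:threevar}, we are done.

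There is essentially no obstacle here, since the heavy lifting was done in Theorem~\ref{thm:main} and in the prior work~\cite{NoSa18ijfcs} encapsulated by Theorem~\ref{thm:one_three}. The only point worth a sentence is that one should verify the quantifier in Theorem~\ref{thm:one_three} matches: it asks for the bound of three to hold for \emph{all} one-variable equations (with the infinite-solution escape clause), and that is exactly what Theorem~\ref{thm:main} delivers. If instead one wanted the slightly weaker statement about independent systems that are not required to have a nonperiodic solution, one would invoke the remark in the introduction that the size can then be at most one larger, giving the bound $18$; but that is outside the scope of this particular theorem.

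\begin{proof}
    By Theorem~\ref{thm:main},
    every one-variable word equation
    has either infinitely many solutions or at most three.
    Therefore the hypothesis of Theorem~\ref{thm:one_three} is satisfied,
    and it follows that Conjecture~\ref{con:threevar} is true for $c = 17$,
    that is,
    every independent system of constant-free three-variable equations
    with a nonperiodic solution has at most $17$ equations.
\end{proof}
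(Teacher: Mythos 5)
Your proof is correct and matches the paper's, which simply cites Theorem~\ref{thm:main} together with Theorem~\ref{thm:one_three}. You spell out the modus ponens a bit more explicitly, but the argument is identical.
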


\begin{proof}
    Follows from Theorem~\ref{thm:main} and Theorem~\ref{thm:one_three}.
\end{proof}

\section{Conclusion}

We have proved that the maximal size of a finite solution set
of a one-variable word equation is three,
and that the maximal size of an independent system
of constant-free three-variable equations with a nonperiodic solution
is somewhere between two and 17.

Improving the bound 17 is an obvious open problem.
A possible approach would be to improve the results in~\cite{NoSa18ijfcs}.

Another open problem is proving similar bounds
for more than three variables.
The result in~\cite{NoSa18ijfcs} is based on a characterization of
three-generator subsemigroups of a free semigroup
by Budkina and Markov~\cite{BuMa73},
or alternatively a similar result by Spehner~\cite{Sp76,Sp86}.
This means that it is very specific to the three-variable case,
and analyzing the general case would require an entirely different approach.

Finally, characterizing possible solution sets of one-variable equations
would be interesting.
The possible infinite solution sets are given by Theorem~\ref{thm:one_sol_log},
and every singleton set is possible,
but for sets of size two or three the question is open.

\bibliography{../bibtex/ref}

\begin{thebibliography}{10}

\bibitem{AlLa85ehrenfeucht}
M.~H. Albert and J.~Lawrence.
\newblock A proof of {E}hrenfeucht's conjecture.
\newblock {\em Theoret. Comput. Sci.}, 41(1):121--123, 1985.
\newblock \href {http://dx.doi.org/10.1016/0304-3975(85)90066-0}
  {\path{doi:10.1016/0304-3975(85)90066-0}}.

\bibitem{BePeRe10}
Jean Berstel, Dominique Perrin, and Christophe Reutenauer.
\newblock {\em Codes and Automata}.
\newblock Cambridge University Press, 2010.

\bibitem{BuMa73}
L.~G. Budkina and Al.~A. Markov.
\newblock {$F$}-semigroups with three generators.
\newblock {\em Mat. Zametki}, 14:267--277, 1973.

\bibitem{CuKa83}
Karel Culik, II and Juhani Karhum{\"a}ki.
\newblock Systems of equations over a free monoid and {E}hrenfeucht's
  conjecture.
\newblock {\em Discrete Math.}, 43(2--3):139--153, 1983.
\newblock \href {http://dx.doi.org/10.1016/0012-365X(83)90152-8}
  {\path{doi:10.1016/0012-365X(83)90152-8}}.

\bibitem{CzKa07}
Elena Czeizler and Juhani Karhum{\"a}ki.
\newblock On non-periodic solutions of independent systems of word equations
  over three unknowns.
\newblock {\em Internat. J. Found. Comput. Sci.}, 18(4):873--897, 2007.
\newblock \href {http://dx.doi.org/10.1142/S0129054107005030}
  {\path{doi:10.1142/S0129054107005030}}.

\bibitem{CzPl09}
Elena Czeizler and Wojciech Plandowski.
\newblock On systems of word equations over three unknowns with at most six
  occurrences of one of the unknowns.
\newblock {\em Theoret. Comput. Sci.}, 410(30--32):2889--2909, 2009.
\newblock \href {http://dx.doi.org/10.1016/j.tcs.2009.01.023}
  {\path{doi:10.1016/j.tcs.2009.01.023}}.

\bibitem{DaPl11}
Robert D{\c{a}}browski and Wojciech Plandowski.
\newblock On word equations in one variable.
\newblock {\em Algorithmica}, 60(4):819--828, 2011.
\newblock \href {http://dx.doi.org/10.1007/s00453-009-9375-3}
  {\path{doi:10.1007/s00453-009-9375-3}}.

\bibitem{EyGoMa94}
S.~Eyono~Obono, P.~Goral{\v{c}}{\'{\i}}k, and M.~Maksimenko.
\newblock Efficient solving of the word equations in one variable.
\newblock In {\em Proceedings of the 19th MFCS}, volume 841 of {\em LNCS},
  pages 336--341. Springer, 1994.
\newblock \href {http://dx.doi.org/10.1007/3-540-58338-6_80}
  {\path{doi:10.1007/3-540-58338-6_80}}.

\bibitem{Gu86}
V.~S. Guba.
\newblock Equivalence of infinite systems of equations in free groups and
  semigroups to finite subsystems.
\newblock {\em Mat. Zametki}, 40(3):321--324, 1986.
\newblock \href {http://dx.doi.org/10.1007/BF01142470}
  {\path{doi:10.1007/BF01142470}}.

\bibitem{HaNo03}
Tero Harju and Dirk Nowotka.
\newblock On the independence of equations in three variables.
\newblock {\em Theoret. Comput. Sci.}, 307(1):139--172, 2003.
\newblock \href {http://dx.doi.org/10.1016/S0304-3975(03)00098-7}
  {\path{doi:10.1016/S0304-3975(03)00098-7}}.

\bibitem{HoZe15}
{\v{S}}t{\v{e}}p{\'a}n Holub and Jan {\v{Z}}emli{\v{c}}ka.
\newblock Algebraic properties of word equations.
\newblock {\em J. Algebra}, 434:283--301, 2015.
\newblock \href {http://dx.doi.org/10.1016/j.jalgebra.2015.03.021}
  {\path{doi:10.1016/j.jalgebra.2015.03.021}}.

\bibitem{Je16one}
Artur Je{\.z}.
\newblock One-variable word equations in linear time.
\newblock {\em Algorithmica}, 74(1):1--48, 2016.
\newblock \href {http://dx.doi.org/10.1007/s00453-014-9931-3}
  {\path{doi:10.1007/s00453-014-9931-3}}.

\bibitem{KaPl94}
Juhani Karhum{\"a}ki and Wojciech Plandowski.
\newblock On the defect effect of many identities in free semigroups.
\newblock In Gheorghe Paun, editor, {\em Mathematical aspects of natural and
  formal languages}, pages 225--232. World Scientific, 1994.

\bibitem{KaSa11adian}
Juhani Karhum{\"a}ki and Aleksi Saarela.
\newblock On maximal chains of systems of word equations.
\newblock {\em Proc. Steklov Inst. Math.}, 274:116--123, 2011.
\newblock \href {http://dx.doi.org/10.1134/S0081543811060083}
  {\path{doi:10.1134/S0081543811060083}}.

\bibitem{LaPl11}
Markku Laine and Wojciech Plandowski.
\newblock Word equations with one unknown.
\newblock {\em Internat. J. Found. Comput. Sci.}, 22(2):345--375, 2011.
\newblock \href {http://dx.doi.org/10.1142/S0129054111008088}
  {\path{doi:10.1142/S0129054111008088}}.

\bibitem{NoSa18ijfcs}
Dirk Nowotka and Aleksi Saarela.
\newblock One-variable word equations and three-variable constant-free word
  equations.
\newblock {\em Internat. J. Found. Comput. Sci.}, To appear.

\bibitem{Sa15ejc}
Aleksi Saarela.
\newblock Systems of word equations, polynomials and linear algebra: A new
  approach.
\newblock {\em European J. Combin.}, 47:1--14, 2015.
\newblock \href {http://dx.doi.org/10.1016/j.ejc.2015.01.005}
  {\path{doi:10.1016/j.ejc.2015.01.005}}.

\bibitem{Sa17stacs}
Aleksi Saarela.
\newblock Word equations where a power equals a product of powers.
\newblock In {\em Proceedings of the 34th STACS}, volume~66 of {\em LIPIcs},
  pages 55:1--55:9. Schloss Dagstuhl--Leibniz-Zentrum fuer Informatik, 2017.
\newblock \href {http://dx.doi.org/10.4230/LIPIcs.STACS.2017.55}
  {\path{doi:10.4230/LIPIcs.STACS.2017.55}}.

\bibitem{Sp76}
Jean-Claude Spehner.
\newblock {\em Quelques probl\'emes d'extension, de conjugaison et de
  pr\'esentation des sous-mono{\"i}des d'un mono{\"i}de libre}.
\newblock PhD thesis, Univ. Paris, 1976.

\bibitem{Sp86}
Jean-Claude Spehner.
\newblock Les systemes entiers d'\'equations sur un alphabet de 3 variables.
\newblock In {\em Semigroups}, pages 342--357, 1986.

\end{thebibliography}

\end{document}